\def\newaliasedtheorem#1[#2]#3{
  \newaliascnt{#1@alt}{#2}
  \newtheorem{#1}[#1@alt]{#3}
  \expandafter\newcommand\csname #1@altname\endcsname{#3}
}
\theoremstyle{plain}
\newtheorem{theorem}{Theorem}[section]
\theoremstyle{definition}
\theoremstyle{remark}
\numberwithin{equation}{section}
\def\R{{\mathbb R}}
\def\C{{\mathbb C}}
\def\Q{\mathbf Q}
\def\P{\mathbf P}
\def\d{{\partial}}
\def\a{{\alpha}}
\def\i{{\infty}}
\DeclareMathOperator{\diver}{div}
\begin{document}

\title{THE ELECTROSTATIC LIMIT FOR THE 3D ZAKHAROV SYSTEM}

\author{Paolo Antonelli}
\address{GSSI, Gran Sasso Science Institute, Viale F. Crispi 7,  67100 L'Aquila, Italy}
\email{paolo.antonelli@gssi.infn.it}

\author{Luigi Forcella}
\address{Scuola Normale Superiore, Piazza dei Cavalieri, 7, 56126 Pisa, Italy}
\email{luigi.forcella@sns.it}

\subjclass[2000]{Primary: 35Q55, 35L70}

\keywords{Zakharov system; singular limit, dispersive equations}

\begin{abstract}
We consider the vectorial Zakharov system describing Langmuir waves in a weakly magnetized plasma. In its original derivation \cite{Z} the evolution for the electric field envelope is governed by a Schr\"odinger type equation with a singular parameter which is usually large in physical applications. Motivated by this, we study the rigorous limit as this parameter goes to infinity. By using some Strichartz type estimates to control separately the fast and slow dynamics in the problem, we show that the evolution of the electric field envelope is asymptotically constrained onto the space of irrotational vector fields.
\end{abstract}

\maketitle
\section{Introduction.}\label{introduction}

In this paper we consider the vectorial Zakharov system \cite{Z} describing Langmuir waves in a weakly magnetized plasma.
After a suitable rescaling of the variables it reads \cite{SS}
\begin{equation}\label{ZG}
\left\{ \begin{array}{ll}
i\partial_tu-\alpha\nabla\times\nabla\times u+\nabla(\diver u)=nu\\
\frac{1}{c_s^2} \partial_{tt}n-\Delta n=\Delta|u|^2
\end{array} \right.,
\end{equation}

\noindent subject to initial conditions
\begin{equation*}
u(0)=u_0, \quad n(0)=n_0, \quad\d_tn(0)=n_1.
\end{equation*}
Here $u:\R\times\R^3\to\C^3$ describes the slowly varying envelope of the highly oscillating electric field, whereas $n:\R\times\R^3\to\R$ is the ion density fluctuation.
The rescaled constants in \eqref{ZG} are $\alpha=\frac{c^2}{3v_e^2}$, $c$ being the speed of light and $v_e=\sqrt{\frac{T_e}{m_e}}$ the electron thermal velocity, while $c_s$ is proportional to the ion acoustic speed.
In many physical situations the parameter $\alpha$ is relatively large, see for example table 1, p. 47 in \cite{TtH}, hence hereafter we will only consider $\alpha\geq1$. In the large $\alpha$ regime, the electric field is almost irrotational and in the electrostatic limit $\alpha\to\infty$ the dynamics is asymptotically described by
\begin{equation}\label{ZL}
\left\{ \begin{array}{ll}
i\partial_tu+\Delta u=\Q(nu) \\
\frac{1}{c_s^2} \partial_{tt}n-\Delta n=\Delta|u|^2
\end{array} \right.,
\end{equation}
where $\Q=-(-\Delta)^{-1}\nabla\diver$ is the Helmholtz projection operator onto irrotational vector fields. 
By further simplifying \eqref{ZG} it is possible to consider the so called scalar Zakharov system
\begin{equation}\label{Z}
\left\{ \begin{array}{ll}
i\partial_tu+\Delta u=nu \\
\frac{1}{c_s^2} \partial_{tt}n-\Delta n=\Delta|u|^2
\end{array} \right.,
\end{equation}
which retains the main features of \eqref{ZL}. In the subsonic limit $c_s\to\infty$ we find the cubic focusing nonlinear Schr\"odinger equation
\begin{equation*}
i\d_tu+\Delta u+|u|^2u=0.
\end{equation*}

The Cauchy problem for the Zakharov system has been extensively studied in the mathematical literature. For the local and global well-posedness, see \cite{SS2, OT1, OT2, KPV, BC} and the recent results concerning low regularity solutions \cite{GTV, BH}. In \cite{M} formation of blow-up solutions is studied by means of virial identities, see also \cite{GM} where self-similar solutions are constructed in two space dimensions. The subsonic limit $c_s\to\infty$ for \eqref{Z} is investigated in \cite{SW}. Furthermore, some related singular limits are also studied in \cite{MN}, considering the Klein-Gordon-Zakharov system. Here in this paper we do not consider such limits, hence without loss of generalities we can set $c_s=1$.

The aim of our research is to rigorously study the electrostatic limit for the vectorial Zakharov equation, namely we show that mild solutions to \eqref{ZG} converge towards solutions to \eqref{ZL} as $\alpha\to\infty$. 

As we will see below, we will investigate this limit by exploiting two auxiliary systems associated to \eqref{ZG}, \eqref{ZL}, namely systems \eqref{ZM2} and \eqref{eq4.5} below. 
Those are obtained by considering $v=\d_tu$ as a new variable and by studying the Cauchy problem for the auxiliary system describing the dynamics for $(v, n)$ and a state equation for $u$ (see \autoref{sect:LWP} for more details).
This approach, already introduced in \cite{OT1, OT2} to study local and global well-posedness for the Zakharov system \eqref{Z}, overcomes the problem generated by the loss of derivatives on the term $|u|^2$ in the wave equation, but in our context it introduces a new difficulty. Indeed the initial data $v(0)$ is not uniformly bounded for $\alpha\geq1$, see also the beginning of \autoref{sect:conv} below for a more detailed mathematical discussion.

For this reason we will need to consider a family of well-prepared initial data; more precisely we will take a set $u_0^\alpha$ of initial states for the Schr\"odinger part in \eqref{ZG} which converges to an irrotational initial datum for \eqref{ZL}.

We consider initial data $(u_0^\alpha, n_0^\alpha, n_1^\alpha)\in H^2(\R^3)\times H^1(\R^3)\times L^2(\R^3)=:\mathcal H_2$ for \eqref{ZG}, converging in the same space to a set of initial data 
$(u_0^\infty, n_0^\infty, n_1^\infty)\in \mathcal{H}_2$, with $u_0^\infty$ an irrotational vector field, and we show the convergence in the space
\begin{equation*}
\begin{aligned}
\mathcal X_T:=\big\{(u, n)\;:\;&u\in L^q(0, T;W^{2, r}(\R^3)),\;\forall\;(q, r)\;\textrm{admissible pair},\\
&n\in L^\infty(0, T;H^1(\R^3))\cap W^{1, \infty}(0, T;L^2(\R^3))\big\}.
\end{aligned}
\end{equation*}

For a more detailed discussion about notations and the spaces considered in this paper we refer the reader to \autoref{sect:prel}.

Before stating our main result we first recall the local well-posedness result in $\mathcal H_2$ for system \eqref{ZL}.
\begin{theorem}[\cite{OT1}]\label{lwpOT1}
Let $(u_0, n_0, n_1)\in\mathcal H_2$, then there exist a maximal time $0<T_{max}\leq\infty$ and a unique solution $(u, n)$ to \eqref{ZL} such that 
$u\in\mathcal C([0, T_{max}); H^2)\cap\mathcal C^1([0, T_{max});L^2)$, $n\in\mathcal C([0, T_{max});H^1)\cap\mathcal C^1([0, T_{max});L^2)$.
Furthermore the solution depends continuously on the initial data and the standard blow-up alternative holds true: either $T_{max}=\infty$ and the solution is global or $T_{max}<\infty$ and we have
\begin{equation*}
\lim_{t\rightarrow T_{max}}\|(u, n, \d_tn)(t)\|_{\mathcal H_2}=\infty.
\end{equation*}
\end{theorem}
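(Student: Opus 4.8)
The central difficulty, as noted in the introduction, is the loss of derivatives produced by the term $\Delta|u|^2$ in the wave equation: a direct Duhamel iteration for $u$ in $H^2$ would require controlling $\Q(nu)$ in $H^2$, which in turn demands $n\in H^2$, a regularity not provided by the wave part. To bypass this I would follow the Ozawa--Tsutsumi strategy of introducing the auxiliary variable $v=\partial_t u$. Differentiating the first equation in \eqref{ZL} in time gives the Schr\"odinger equation
\begin{equation*}
i\partial_t v+\Delta v=\Q\big((\partial_t n)\,u+n\,v\big),
\end{equation*}
while the original equation becomes the elliptic \emph{state equation} $\Delta u=\Q(nu)-iv$, which recovers $u$ with two more spatial derivatives than $v$. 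The initial datum is $v(0)=i\Delta u_0-i\Q(n_0u_0)\in L^2(\R^3)$, well defined since $u_0\in H^2\hookrightarrow L^\infty\cap L^3$ and $n_0\in H^1\hookrightarrow L^6$. The wave equation for $n$ is kept in the form $\partial_{tt}n-\Delta n=\Delta|u|^2$, with $u$ reconstructed from $v$ through the state equation.

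I would then set up a contraction mapping for the pair $(v,n)$ in a space in which $v$ ranges over the Strichartz norms $L^q(0,T;L^r)$ for admissible $(q,r)$ together with $\mathcal C([0,T];L^2)$, and $n$ ranges over $\mathcal C([0,T];H^1)\cap\mathcal C^1([0,T];L^2)$; at each step $u$ is recovered in $\mathcal C([0,T];H^2)$ by elliptic regularity. Three families of estimates are needed. For $v$ one uses the Strichartz estimates for the free Schr\"odinger group applied to the Duhamel formula. For $n$ one uses the energy estimate for the wave propagator: writing $W(t)=|\nabla|^{-1}\sin(t|\nabla|)$, the forcing term $\int_0^t W(t-s)\Delta|u|^2\,ds$ is controlled in $H^1$ by $T\sup_s\||u(s)|^2\|_{H^2}\lesssim T\sup_s\|u(s)\|_{H^2}^2$, using that $H^2(\R^3)$ is an algebra, and similarly for $\partial_t n$ in $L^2$. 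For $u$ the state equation yields $\|u\|_{\dot H^2}=\|\Delta u\|_{L^2}\le\|v\|_{L^2}+\|\Q(nu)\|_{L^2}$, and $\|nu\|_{L^2}\le\|n\|_{L^6}\|u\|_{L^3}$ is lower order and can be absorbed, the remaining $L^2$ norm of $u$ being tracked directly from its own Duhamel formula.

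The main obstacle, and the step deserving the most care, is the nonlinear term $\Q(nv)$ in the $v$-equation: here $n\in L^\infty_tH^1_x\hookrightarrow L^\infty_tL^6_x$ but $v$ lives only at the $L^2$ level, so the product does not sit in $L^2$. The remedy is to exploit Strichartz smoothing rather than energy estimates for the fast Schr\"odinger dynamics: since $\Q$ is bounded on every $L^r$, $1<r<\infty$, one estimates $\|nv\|_{L^{q'}_tL^{r'}_x}\lesssim\|n\|_{L^\infty_tL^6_x}\|v\|_{L^{q}_tL^{3}_x}$ for the admissible pair $(q,r)=(4,3)$ and places the result in the dual Strichartz space, the time integration supplying a factor $T^{\delta}$ with $\delta>0$. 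The remaining term $\Q((\partial_t n)u)$ loses no derivative, as $\partial_t n\in L^2$ and $u\in H^2\hookrightarrow L^\infty$, so it is harmless in $L^1_tL^2_x$.

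Collecting these bounds, the solution map is a contraction on a suitable ball for $T$ small enough depending only on $\|(u_0,n_0,n_1)\|_{\mathcal H_2}$, which yields existence and uniqueness; continuous dependence follows by running the same estimates on the difference of two solutions. Finally, because the existence time depends only on the $\mathcal H_2$ norm of the data, the solution extends as long as $\|(u,n,\partial_t n)(t)\|_{\mathcal H_2}$ remains bounded, giving the standard blow-up alternative with the stated maximal time $T_{max}$.
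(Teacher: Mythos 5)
Your proposal is correct and follows exactly the Ozawa--Tsutsumi strategy that underlies this paper: the statement itself is quoted from \cite{OT1} without proof, but the paper's proof of the analogous result for \eqref{ZG} (\autoref{prop:small_times} and \autoref{frthm1}) uses your scheme verbatim --- the auxiliary variable $v=\partial_tu$, a contraction for the pair $(v,n)$ in Strichartz/energy norms with the product $nv$ placed in a dual Strichartz norm to gain a positive power of $T$, the elliptic state equation to recover $u$ in $H^2$, and the algebra property of $H^2(\R^3)$ for the wave forcing. Two routine points should be tightened: your H\"older pairing $\|n\|_{L^6}\|v\|_{L^3}$ lands in $L^2_x$ rather than $L^{3/2}_x$ (the paper instead uses $\|nv\|_{L^1_tL^2_x}\lesssim T^{1/2}\|n\|_{L^\infty_tH^1_x}\|v\|_{L^2_tL^6_x}$, and to avoid circularity it defines $u$ through $u_0+\int_0^tv(s)\,ds$ in the state equation rather than solving $\Delta u-\Q(nu)=-iv$ for $u$), and one must add the short distributional verification that the constructed $v$ coincides with $\partial_tu$, so that $(u,n)$ indeed solves \eqref{ZL}.
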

Analogously we are going to prove the same local well-posedness result for system \eqref{ZG}. 
Moreover, despite the fact that the initial datum for \eqref{ZM2}  is not uniformly bounded for $\alpha\geq1$ (see the discussion at the beginning of \autoref{sect:LWP}), we can anyway infer some a priori bounds in $\alpha$ for the solution $(u^\alpha, n^\alpha)$ to \eqref{ZG}.
\begin{theorem}\label{thm:lwp}
Let $(u^\alpha_0, n^\alpha_0, n^\alpha_1)\in\mathcal H_2$, then there exist a maximal time $T^\alpha_{max}>0$ and a unique solution $(u^\alpha, n^\alpha)$ to \eqref{ZG} such that

\begin{itemize}

\item $u^\alpha\in\mathcal C([0, T_{max}^\alpha); H^2)\cap\mathcal C^1([0, T_{max}^\alpha);L^2)$,
\item $n^\alpha\in\mathcal C([0, T_{max}^\alpha);H^1)\cap\mathcal C^1([0, T_{max}^\alpha);L^2)$.
\end{itemize}
Furthermore the existence times $T^\alpha_{max}$ are uniformly bounded from below, $0<T^\ast\leq T^\alpha_{max}$ for any $\alpha\geq1,$ and we have
\begin{equation*}
\|(u^\alpha, n^\alpha, \d_tn^\alpha)\|_{L^\infty(0, T;\mathcal H_2)}+\|\d_tu^\alpha\|_{L^2(0, T;L^6)}\leq C(T, \|u_0^\alpha, n_0^\alpha, n_1^\alpha\|_{\mathcal H_2}),
\end{equation*}
for any $0<T<T^\alpha_{max}$, where the constant above does not depend on $\alpha\geq1$.
\end{theorem}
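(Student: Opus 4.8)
The plan is to first diagonalize the linear Schr\"odinger operator by the Helmholtz decomposition $u = \P u + \Q u$. Using $\nabla\times\nabla\times u = \nabla(\diver u) - \Delta u$ one checks that $-\alpha\nabla\times\nabla\times u + \nabla(\diver u) = \alpha\Delta\P u + \Delta\Q u$, so that the first equation in \eqref{ZG} becomes $i\partial_t u + \alpha\Delta\P u + \Delta\Q u = nu$. Hence the free evolution splits into a fast solenoidal part, propagated by $e^{i\alpha t\Delta}$, and a slow irrotational part, propagated by $e^{it\Delta}$; I set $U_\alpha(t) := \P e^{i\alpha t\Delta} + \Q e^{it\Delta}$. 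The crucial gain is that the dispersive estimate for $e^{i\alpha t\Delta}$ carries a factor $|\alpha t|^{-3/2}$, so by the time rescaling $s = \alpha t$ every homogeneous Strichartz norm of the solenoidal piece improves by $\alpha^{-1/q}$ for an admissible pair $(q,r)$. In particular all Strichartz constants for $U_\alpha$ are bounded uniformly in $\alpha\ge 1$, which is what will allow the whole construction to be carried out with $\alpha$-independent constants.

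Next I would follow the Ozawa--Tsutsumi device and pass to the auxiliary variable $v = \partial_t u$ (together with $m = \partial_t n$), which removes the loss of derivatives in the wave forcing. Differentiating the Schr\"odinger equation gives $i\partial_t v + \alpha\Delta\P v + \Delta\Q v = m\,u + n\,v$, while the reconstruction $\partial_t u = v$, $u(0)=u_0$, recovers $u$. The point is that the second-order term $\Delta|u|^2$ in the wave equation can be rewritten using the Schr\"odinger equation itself: projecting it, $\Delta\P u = \alpha^{-1}[\P(nu) - i\P v]$ and $\Delta\Q u = \Q(nu) - i\Q v$, so that $\Delta|u|^2 = 2\RE(\bar u\cdot\Delta u) + 2|\nabla u|^2$ involves only $u$, its first derivatives, $n$ and $v$, with no second derivative of $u$. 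This is the reformulation underlying the auxiliary systems mentioned in the introduction, and it is what makes the wave forcing estimable in $L^1_t L^2_x$.

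I would then set up a contraction mapping for $(u,v,n,m)$ in the space $\mathcal X_T$ augmented by the Strichartz spaces for $v$ and the energy space for $(n,m)$, using Duhamel's formula with $U_\alpha$ for the Schr\"odinger parts and the wave propagators $\cos(t\sqrt{-\Delta})$ and $\sin(t\sqrt{-\Delta})/\sqrt{-\Delta}$ for $(n,m)$. The nonlinear terms $nu$, $nv$, $mu$ and the rewritten forcing are controlled by Strichartz estimates and the embedding $H^2(\R^3)\hookrightarrow L^\infty$, together with standard product estimates; since the Strichartz constants for $U_\alpha$ are uniform in $\alpha$ (the fast part only gains), the radius and the contraction time come out independent of $\alpha$, yielding the uniform lower bound $T^\ast$ on $T^\alpha_{max}$. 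The qualitative statements --- continuity in time, uniqueness, and the blow-up alternative --- then follow exactly as in \autoref{lwpOT1}. For the a priori bounds, mass conservation gives $\|u^\alpha(t)\|_{L^2}=\|u_0^\alpha\|_{L^2}$ uniformly; the identities $\Delta\P u = \alpha^{-1}[\P(nu)-i\P v]$ and $\Delta\Q u = \Q(nu)-i\Q v$ then upgrade this to a uniform $H^2$ bound on $u^\alpha$, because the dangerous factor $\alpha$ hidden in $\|\P v\|$ is exactly compensated by the prefactor $\alpha^{-1}$; a standard energy estimate for the wave equation controls $n^\alpha$ in $H^1$ and $\partial_t n^\alpha$ in $L^2$; and the $L^2_tL^6_x$ bound on $\partial_t u^\alpha$ is obtained from the mild formula for $v$, the dispersive gain of the fast group, and the bounds just established.

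The main obstacle is precisely the uniform-in-$\alpha$ treatment of the velocity $v=\partial_t u$: evaluating the Schr\"odinger equation at $t=0$ gives $v(0) = i(\alpha\Delta\P + \Delta\Q)u_0 - i\,n_0 u_0$, whose solenoidal part is of size $O(\alpha)$ in $L^2$ and therefore not uniformly bounded. The entire difficulty is to show that this large initial velocity never enters the estimates for $u$ and $n$ with its full strength. This is achieved by keeping the fast and slow dynamics rigorously separated throughout: the solenoidal contributions always appear either dressed with the prefactor $\alpha^{-1}$ coming from inverting $\alpha\Delta\P$ in the state equation, or through the fast propagator $e^{i\alpha t\Delta}$ whose Strichartz norms carry the compensating gain $\alpha^{-1/q}$. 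Making this cancellation quantitative and uniform over $\alpha\ge1$ --- so that the contraction constants, the existence time, and the final bounds are all genuinely $\alpha$-independent --- is the technical heart of the argument.
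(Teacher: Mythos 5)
Your architecture is the paper's: the Helmholtz diagonalization $-\alpha\nabla\times\nabla\times+\nabla\diver=\alpha\Delta\P+\Delta\Q$ and the propagator $U_Z(t)=U(\alpha t)\P+U(t)\Q$, Strichartz estimates with an $\alpha$-gain on the solenoidal part (\autoref{lemma:strich}), the Ozawa--Tsutsumi auxiliary system \eqref{ZM2} in $v=\d_tu$ with $u$ recovered from the elliptic state equation, and a contraction with $\alpha$-independent constants; you also correctly isolate the obstruction, namely that $\P v(0)=-i\alpha\Delta\P u_0$ has size $O(\alpha)$ in $L^2$. What is missing is the concrete device the paper uses to make the contraction actually close uniformly: the renormalized unknown $\tilde v(t)=v(t)-U(\alpha t)\P(i\alpha\Delta u_0)$ of \eqref{eq:v_tilde}, whose initial datum is $O(1)$, so that the fixed point in \autoref{frthm1} is run for $(\tilde v,n)$ with $\alpha$-independent radius and time, the subtracted free wave re-entering only through the source term $n\,U(\alpha\cdot)\P(i\alpha\Delta u_0)$ and through its time primitive in the state equation. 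A contraction for $v$ itself in a ball of $S^0([0,T])$, as you describe, cannot give a uniform time: $\|v\|_{L^\infty_tL^2_x}\geq\|v(0)\|_{L^2}\sim\alpha$ forces the radius $M\sim\alpha$ and hence $T\to0$ through the smallness condition. Your alternative of keeping the fast and slow parts separated (e.g.\ weighting $\|\P v\|_{L^\infty_tL^2_x}$ by $\alpha^{-1}$ in the contraction norm) can be made equivalent, but it must be written down and checked against every occurrence of $v$ in the nonlinearity and in the state equation; it is not automatic.

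There is also a quantitative point you must resolve before the cancellation you invoke is real. You state that the solenoidal Strichartz norms gain $\alpha^{-1/q}$, which is what the time rescaling $s=\alpha t$ gives and is in fact sharp: for $\hat f$ supported at $|\xi|\sim1$ one has $\|U(\alpha t)f\|_{L^6}\gtrsim1$ on a time interval of length $\sim\alpha^{-1}$, so $\|U(\alpha t)f\|_{L^2_tL^6_x}\gtrsim\alpha^{-1/2}\|f\|_{L^2}$. But absorbing the factor $\alpha$ carried by $U(\alpha t)\P(i\alpha\Delta u_0)$ in the norm $L^2_tL^6_x$ requires a full $\alpha^{-1}$, i.e.\ the exponent $-2/q$ at $q=2$ asserted in \eqref{e5.20}; with the gain $\alpha^{-1/q}$ one is left with $\alpha^{1-1/q}\geq\alpha^{1/2}$, which is unbounded for every admissible $q$. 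So the Strichartz gain alone does not neutralize the large solenoidal datum where it enters the nonlinearity $nv$; you need either to justify the stronger exponent or to exploit additional structure, e.g.\ $U(\alpha s)\P(i\alpha\Delta u_0)=\d_s\left[U(\alpha s)\P u_0\right]$, which shows that its time primitive (the quantity actually appearing in the state equation for $u$) is bounded by $2\|u_0\|_{H^2}$ with no Strichartz input at all, and which suggests an integration by parts in the Duhamel term. As written, ``the compensating gain $\alpha^{-1/q}$'' does not compensate $O(\alpha)$, and this is precisely the step you defer as ``the technical heart.''
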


Our main result in this paper is the following one.

\begin{theorem}\label{thm:main}
Let $(u_0^\alpha, n_0^\alpha, n_1^\alpha)\in \mathcal{H}_2$ and let $(u^\alpha, n^\alpha)$ be the maximal solution to \eqref{ZG} defined on the time interval $[0, T_{max}^\alpha)$. Let us assume that
\begin{equation*}
\lim_{\alpha\to\infty}\|(u_0^\alpha, n_0^\alpha, n_1^\alpha)-(u_0^\infty, n_0^\infty, n_1^\infty)\|_{\mathcal{H}_2}=0,
\end{equation*}
for some $(u_0^\infty, n_0^\infty, n_1^\infty)\in \mathcal{H}_2$ such that $u_0^\infty=\Q u_0^\infty$, and let $(u^\infty, n^\infty)$ be the maximal solutions to \eqref{ZL} in the interval $[0, T_{max}^\infty)$ with such initial data. Then
\begin{equation*}
\liminf_{\alpha\to\infty}T_{max}^\alpha\geq T^\infty_{max}
\end{equation*}
and we have the following convergence
\begin{equation*}
\lim_{\alpha\to\infty}\|(u^\alpha, n^\alpha)-(u^\infty, n^\infty)\|_{\mathcal X_T}=0,
\end{equation*}
for any $0<T<T_{max}^\infty$.
\end{theorem}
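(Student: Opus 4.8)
The plan is to diagonalize the Schr\"odinger operator on the left-hand side of \eqref{ZG} by means of the Helmholtz decomposition $u=\Q u+\P u$, where $\P:=I-\Q$ is the projection onto divergence-free fields. Using the vector identity $\nabla\times\nabla\times u=\nabla(\diver u)-\Delta u$ together with the relation $\nabla\diver=\Delta\Q$ (immediate from $\Q=-(-\Delta)^{-1}\nabla\diver$), one checks the operator identity
\begin{equation*}
-\alpha\nabla\times\nabla\times u+\nabla(\diver u)=\Delta\Q u+\alpha\Delta\P u .
\end{equation*}
Since $\Q,\P$ commute with $\partial_t$ and $\Delta$, projecting the Schr\"odinger equation of \eqref{ZG} splits it into a \emph{slow} irrotational part and a \emph{fast} solenoidal part,
\begin{equation*}
i\partial_t(\Q u^\alpha)+\Delta(\Q u^\alpha)=\Q(n^\alpha u^\alpha),\qquad
i\partial_t(\P u^\alpha)+\alpha\Delta(\P u^\alpha)=\P(n^\alpha u^\alpha).
\end{equation*}
The first equation is, up to the contribution of $\P u^\alpha$ inside the nonlinearity, exactly the Schr\"odinger equation of \eqref{ZL}, while the second carries the singular parameter in front of the Laplacian. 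I would also record that, since $u_0^\infty$ is irrotational and $\P$ annihilates the range of $\Q$, the limiting field stays irrotational, $\P u^\infty\equiv0$, so that $u^\infty=\Q u^\infty$.

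Next I would set up the difference system. Writing $w^\alpha:=\Q u^\alpha-u^\infty$, $z^\alpha:=\P u^\alpha$ and $m^\alpha:=n^\alpha-n^\infty$, so that $u^\alpha-u^\infty=w^\alpha+z^\alpha$, subtraction gives
\begin{align*}
i\partial_t w^\alpha+\Delta w^\alpha&=\Q\big(m^\alpha u^\alpha+n^\infty(w^\alpha+z^\alpha)\big),\\
i\partial_t z^\alpha+\alpha\Delta z^\alpha&=\P(n^\alpha u^\alpha),\\
\partial_{tt}m^\alpha-\Delta m^\alpha&=\Delta\big(|u^\alpha|^2-|u^\infty|^2\big).
\end{align*}
The hypotheses furnish smallness of the data: $\|z^\alpha(0)\|=\|\P u_0^\alpha\|\to0$ because $\P u_0^\infty=0$, while $\|w^\alpha(0)\|=\|\Q(u_0^\alpha-u_0^\infty)\|\to0$ and $\|m^\alpha(0)\|_{H^1}+\|\partial_t m^\alpha(0)\|_{L^2}\to0$, all in the relevant norms.

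The core is then a closed system of estimates on a fixed interval $[0,T]$ with $T<T_{max}^\infty$, combining Strichartz estimates for the two Schr\"odinger flows with energy estimates for the wave part. For $w^\alpha$ I would use the standard inhomogeneous Strichartz estimates for $e^{it\Delta}$; for the fast part, the scaling $e^{it\alpha\Delta}$ yields, for non-endpoint admissible pairs $(q,r)$ and $(\tilde q,\tilde r)$, the gain
\begin{equation*}
\|z^\alpha\|_{L^q(0,T;L^r)}\lesssim \alpha^{-1/q}\|\P u_0^\alpha\|_{L^2}+\alpha^{-(1/q+1/\tilde q)}\,\|\P(n^\alpha u^\alpha)\|_{L^{\tilde q'}(0,T;L^{\tilde r'})},
\end{equation*}
which is the mechanism forcing $z^\alpha\to0$, the nonlinear source being controlled by the uniform $\mathcal H_2$-bounds of \autoref{thm:lwp}. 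The wave difference is estimated in $L^\infty H^1\cap W^{1,\infty}L^2$ after writing $|u^\alpha|^2-|u^\infty|^2=2\RE\big(\overline{u^\infty}(w^\alpha+z^\alpha)\big)+|w^\alpha+z^\alpha|^2$ and using the uniform bounds to absorb the lost derivative. The sources of $w^\alpha$ and $m^\alpha$ then consist of the vanishing data, the small fast part $z^\alpha$, and terms linear in the unknowns $(w^\alpha,m^\alpha)$ themselves, so Gronwall on $[0,T]$ closes the loop and forces all three differences to zero uniformly as $\alpha\to\infty$. The lower bound $\liminf_\alpha T_{max}^\alpha\geq T_{max}^\infty$ follows from a continuation argument: starting from the uniform lower bound $T^\ast$ and continuing through the just-established boundedness of $(u^\alpha,n^\alpha,\partial_t n^\alpha)$ in $\mathcal H_2$, the blow-up alternative of \autoref{thm:lwp} cannot trigger before any $T<T_{max}^\infty$ for $\alpha$ large.

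The main obstacle I anticipate is twofold, and concerns regularity. First, the convergence is required at the $W^{2,r}$ level for $u$ and the $H^1$ level for $n$, but the wave source $\Delta|u|^2$ loses a derivative, so one cannot run an $H^2$ Strichartz estimate on the product $n^\alpha u^\alpha$ directly; this is precisely why the auxiliary variable $v=\partial_t u$ and the a priori bound $\partial_t u^\alpha\in L^2L^6$ of \autoref{thm:lwp} are needed, and the difference estimates must be reorganized around $v^\alpha$ rather than $u^\alpha$ alone. Second, and specific to the singular limit, the auxiliary datum $v^\alpha(0)=i\big(\Delta\Q u_0^\alpha+\alpha\Delta\P u_0^\alpha-n_0^\alpha u_0^\alpha\big)$ is \emph{not} uniformly bounded in $\alpha$, owing to the factor $\alpha\Delta\P u_0^\alpha$; hence the fast component cannot be controlled by the energy method of the auxiliary system and must instead be handled purely through the dispersive $\alpha^{-1/q}$ gain above. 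The delicate point is therefore to keep the low-regularity smallness of $z^\alpha$ and its high-regularity uniform boundedness on separate footings and to reconcile them by interpolation, so as to recover convergence in the full norm of $\mathcal X_T$.
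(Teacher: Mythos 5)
Your overall strategy coincides with the paper's: diagonalize the linear flow through the Helmholtz decomposition so that $U_Z(t)=U(\alpha t)\P+U(t)\Q$, exploit the $\alpha^{-1/q}$ gain in the Strichartz estimates for the fast solenoidal component (this is exactly \autoref{lemma:strich}), pass to the auxiliary variable $v=\d_tu$ to dodge the loss of derivative in $\Delta|u|^2$, and combine a local convergence estimate with a continuation argument to reach every $T<T^\infty_{max}$. You also correctly flag the two real difficulties (the derivative loss and the unboundedness of $v^\alpha(0)$ caused by the term $\alpha\Delta\P u_0^\alpha$). Up to that point the proposal tracks the paper's proof, which runs the contraction-type difference estimates for $(v^\alpha-v^\infty, n^\alpha-n^\infty, u^\alpha-u^\infty)$ on a short interval and then glues subintervals rather than invoking Gronwall --- an inessential difference.

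There is, however, one genuine gap: your proposed mechanism for recovering convergence at the top regularity. Interpolating between smallness of $z^\alpha=\P u^\alpha$ in low norms and uniform boundedness in $H^2$ only yields $\|z^\alpha\|_{H^{2-\eps}}\to0$ for $\eps>0$, and the pair $(q,r)=(\infty,2)$ carries no $\alpha^{-1/q}$ gain, so this route cannot produce $\|\P u^\alpha\|_{L^\infty_tH^2_x}\to0$, which is required for convergence in $\mathcal X_T$. The paper closes this at full regularity by a different device: the third (elliptic, ``state'') equation of the auxiliary system \eqref{ZM2} gives
\begin{equation*}
\alpha\,\Delta\P u^\alpha=-i\P v^\alpha+\P(n^\alpha u^\alpha),
\end{equation*}
so that, after dividing by $\alpha$ and using the bound $\|v^\alpha\|_{L^\infty_tL^2_x}\lesssim\alpha\|\P u_0^\alpha\|_{H^2}+C(M)$ extracted from the Duhamel formula for $v^\alpha$, one gets
\begin{equation*}
\|\Delta\P u^\alpha\|_{L^\infty_tL^2_x}\lesssim\|\P u_0^\alpha\|_{H^2}+\alpha^{-1}C(M)\longrightarrow0 ,
\end{equation*}
thanks to the well-preparedness $\|\P u_0^\alpha\|_{H^2}\to0$ (which here follows from $u_0^\alpha\to u_0^\infty$ in $H^2$ and $\P u_0^\infty=0$). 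The same elliptic identity applied to the irrotational part, $\Delta\Q(u^\alpha-u^\infty)=i\Q(v^\alpha-v^\infty)-\Q(n^\alpha u^\alpha-n^\infty u^\infty)$, converts the $L^2_tL^6_x$ smallness of $v^\alpha-v^\infty$ and the $\mathcal W^1$ smallness of $n^\alpha-n^\infty$ into $L^\infty_tH^2_x$ smallness of $u^\alpha-u^\infty$ with no interpolation and no derivative loss. You should replace the interpolation step with this use of the state equation; otherwise your argument only delivers convergence in $H^{2-\eps}$, short of the claimed $\mathcal X_T$ convergence.
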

The paper is structured as follows. In \autoref{sect:prel} we fix some notations and give some preliminary results which will be used in the analysis of the problem below. In \autoref{sect:LWP} we show the local well-posedness of system \eqref{ZG} in the space $\mathcal{H}_2$. Finally in \autoref{sect:conv} we investigate the electrostatic limit and prove the main theorem.
\subsection*{Acknowledgements} 
This paper and its project originated after many useful discussions with Prof. Pierangelo Marcati, during second author's M. Sc. thesis work. We would like to thank P. Marcati for valuable suggestions.

\section{Preliminary results and tools.}\label{sect:prel}
In this section we introduce notations and some preliminary results which will be useful in the analysis below. The Fourier transform of a function $f$ is defined by
\begin{equation*}
\mathcal F(f)(\xi)=\hat f(\xi)=\int_{\R^3}e^{-2\pi ix\cdot\xi}f(x)\,dx,
\end{equation*}
with its inverse 
\begin{equation*}
f(x)=\int_{\R^3}e^{2\pi ix\cdot\xi}\hat f(\xi)\,d\xi.
\end{equation*}
Given an interval $I\subset\mathbb{R},$ we denote by $L^q(I;L^r)$ the Bochner space equipped with the norm defined by
$$\|f\|_{L^q(I;L^r)}=\bigg(\int_{I}\|f(s)\|_{L^r(\mathbb{R}^3)}^q\,ds\bigg)^{1/q},$$
where $f=f(s,x).$ When no confusion is possible, we write $L^q_tL^r_x=L^q(I;L^r(\R^3))$. Given two Banach spaces $X, Y$, we denote $\|f\|_{X\cap Y}:=\max\{\|f\|_X, \|f\|_Y\}$ for $f\in X\cap Y$. 
With $W^{k,p}$ we denote the standard Sobolev spaces and for $p=2$ we write $H^k=W^{k, 2}$.
$A\lesssim B$ means that there exists a universal constant $C$ such that $A\leq CB$ and in general in a chain of inequalities the constant may change from one line to the other.\newline
As already said in the Introduction, given a vector field $F$, we denote by $\Q F=-(-\Delta)^{-1}\nabla\diver F$ its projection into irrotational fields, moreover $\P=1-\Q$ is its orthogonal projection operator onto solenoidal fields. 
Let us just recall that $\nabla\times F$ is the standard curl operator on $\R^3$.

\noindent The space of initial data is denoted by $\mathcal{H}_2:=H^2(\R^3)\times H^1(\R^3)\times L^2(\R^3)$. A pair of Lebesgue exponents is called \emph{Schr\"odinger admissible} (or simply admissible) if $2\leq q\leq\infty$, $2\leq r\leq 6$ and they are related through
\begin{equation*}
\frac1q=\frac32\left(\frac12-\frac1r\right).
\end{equation*}
Given a time interval $I\subset\R$ we denote the Strichartz space $S^0$(I) to be the closure of the Schwartz space with the norm
\begin{equation*}
\|u\|_{S^0(I)}:=\sup_{(q, r)}\|u\|_{L^q(I;L^r(\R^3))},
\end{equation*}
where the $\sup$ is taken over all admissible pairs; furthermore we write $$S^2(I)=\{u\in S^0(I)\;:\;\nabla^2u\in S^0(I)\}.$$ 

\noindent We define moreover the space 
\begin{equation*}
\mathcal W^1(I)=\{n\,:\,n\in L^\infty(I;H^1)\cap W^{1, \infty}(I;L^2)\}
\end{equation*}
endowed with the norm 
\begin{equation*}
\|n\|_{\mathcal W^1(I)}=\|n\|_{L^{\i}(I;H^1)}+\|\d_tn(t)\|_{L^{\i}(I;L^2)}.
\end{equation*}
The space of solutions we consider in this paper is given by
\begin{equation*}
\mathcal X_T=\{(u, n)\;:\;u\in S^2([0, T]),\;n\in \mathcal W^1([0,T])\}.
\end{equation*}
We will also use the following notation:
\begin{equation*}
\begin{aligned}
\mathcal C([0, T); \mathcal{H}_2)=\big\{(u, n)\;:\;&u\in\mathcal C([0, T);H^2)\cap\mathcal C^1([0, T);L^2),\\
&n\in\mathcal C([0, T);H^1)\cap\mathcal C^1([0, T);L^2)\big\}.
\end{aligned}
\end{equation*}
Here in this paper we only consider positive times, however the same results are valid also for negative times.

We now introduce some basic preliminary results which will be useful later in the analysis.

First of all we consider the linear propagator related to \eqref{ZG}, namely
\begin{equation}\label{f1.3}
i\d_tu=\alpha\nabla\times\nabla\times u-\nabla\diver u.
\end{equation}
\begin{lemma}
Let $u$ solve \eqref{f1.3} with initial datum $u(0)=u_0$, then
\begin{equation}\label{eq:prop}
u(t)=U_Z(t)u_0=\left[U(\alpha t)\P+U(t)\Q\right]u_0,
\end{equation}
where $U(t)=e^{it\Delta}$ is the Schr\"odinger evolution operator.
\end{lemma}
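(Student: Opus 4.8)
The plan is to diagonalize the generator of the flow via the Helmholtz decomposition. First I would record the three algebraic facts that drive everything: the curl-curl identity $\nabla\times\nabla\times F=\nabla\diver F-\Delta F$, together with $\diver(\P F)=0$ and $\nabla\times(\Q F)=0$, which express precisely that $\P F$ is solenoidal and $\Q F$ is irrotational. On the Fourier side $\P$ and $\Q$ are the matrix multipliers with symbols $I-\xi\xi^{\mathsf T}/|\xi|^2$ and $\xi\xi^{\mathsf T}/|\xi|^2$, so they commute with the constant-coefficient operators $\nabla\times\nabla\times$, $\nabla\diver$ and $\Delta$. Hence the generator $L:=\alpha\nabla\times\nabla\times-\nabla\diver$ leaves the solenoidal and irrotational subspaces invariant, and it suffices to understand $L$ on each of them separately.

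Next I would compute $L$ on each subspace. Rewriting $L$ by means of the curl-curl identity as $L=(\alpha-1)\nabla\diver-\alpha\Delta$, on a solenoidal field the $\nabla\diver$ term vanishes, leaving $L=-\alpha\Delta$; on an irrotational field the identity gives $\Delta=\nabla\diver$ (because $\nabla\times\nabla\times$ annihilates irrotational fields), so $L=(\alpha-1)\Delta-\alpha\Delta=-\Delta$. Thus on the solenoidal part the equation $i\partial_t u=Lu$ reduces to $i\partial_t u=-\alpha\Delta u$, solved by $U(\alpha t)\P u_0$, while on the irrotational part it reduces to $i\partial_t u=-\Delta u$, solved by $U(t)\Q u_0$.

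Then I would recombine: since the two subspaces are orthogonal and invariant under the flow, superposing the two solutions yields $u(t)=U(\alpha t)\P u_0+U(t)\Q u_0$, which has initial value $(\P+\Q)u_0=u_0$ and solves \eqref{f1.3}. Uniqueness for the linear flow — equivalently the injectivity of the explicit Fourier-side solution formula — then identifies this expression with $U_Z(t)u_0$ and closes the argument.

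I do not anticipate a genuine obstacle here; the only point requiring care is the book-keeping with the factor $\alpha$, namely that the fast propagator $U(\alpha t)$ attaches to the solenoidal projection $\P$ while the unscaled propagator $U(t)$ attaches to the irrotational projection $\Q$, a placement that is easy to swap by mistake. As a sanity check on the multiplier computation one can alternatively substitute the claimed formula directly into \eqref{f1.3} and verify both the equation and the initial condition in a single step.
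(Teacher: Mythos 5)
Your argument is correct and is essentially the paper's own: both proofs diagonalize the generator via the Helmholtz projections $\P,\Q$ and observe that it acts as $-\alpha\Delta$ on the solenoidal part and as $-\Delta$ on the irrotational part, with the correct pairing of $U(\alpha t)$ with $\P$ and $U(t)$ with $\Q$. The only cosmetic difference is that the paper carries out the computation on the Fourier side, writing the symbol as $|\xi|^2\bigl(\alpha\hat\P(\xi)+\hat\Q(\xi)\bigr)$ and factoring the matrix exponential using $\hat\Q^2=\hat\Q$, whereas you use the curl-curl identity in physical space.
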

\begin{proof}
By taking the Fourier transform \eqref{f1.3} we have
\begin{equation*}
\begin{aligned}
i\d_t\hat u&=-\alpha\xi\times\xi\times\hat u+\xi(\xi\cdot\hat u)\\
&=|\xi|^2\left(\alpha\hat\P(\xi)+\hat\Q(\xi)\right)\hat u(\xi),
\end{aligned}
\end{equation*}
where $\hat\P(\xi), \hat\Q(\xi)$ are two $(3\times3)-$matrices defined by $\hat\Q(\xi)=\frac{\xi\otimes\xi}{|\xi|^2}$, $\hat\P(\xi)=\bold1-\hat\Q(\xi)$ where $\bold1$ is the identity matrix . Hence we may write
\begin{equation*}
\hat u(t)=e^{-i\alpha t|\xi|^2\hat\P(\xi)-it|\xi|^2\hat\Q(\xi)}\hat u_0(\xi).
\end{equation*}
It is straightforward to see that $\hat\Q(\xi)$ is a projection matrix, $0\leq\hat\Q(\xi)\leq1$, $\hat\Q(\xi)=\hat\Q^2(\xi)$, hence $\hat\P(\xi)$ is its orthogonal projection. Consequently we have
\begin{equation*}
\begin{aligned}
\hat u(t)&=e^{-i\alpha t|\xi|^2\hat\P(\xi)}e^{-it|\xi|^2\hat\Q(\xi)}\hat u_0(\xi)\\
&=\left(e^{-i\alpha t|\xi|^2}\hat\P(\xi)+\hat\Q(\xi)\right)\left(e^{-it|\xi|^2}\hat\Q(\xi)+\hat\P(\xi)\right)\hat u_0(\xi)\\
&=\left(e^{-i\alpha t|\xi|^2}\hat\P(\xi)+e^{-it|\xi|^2}\hat\Q(\xi)\right)\hat u_0(\xi).
\end{aligned}
\end{equation*}
By taking the inverse Fourier transform we find \eqref{eq:prop}.
\end{proof}
By the dispersive estimates for the standard Schr\"odinger evolution operator (see for example \cite{C},\cite{GV}, \cite{Y}), we have
\begin{equation}\label{eq:disp_pq}
\begin{aligned}
\|U(t)\Q f\|_{L^p}&\lesssim|t|^{-3\left(\frac12-\frac1p\right)}\|\Q f\|_{L^{p'}}\\
\|U(\alpha t)\P f\|_{L^p}&\lesssim|\alpha t|^{-3\left(\frac12-\frac1p\right)}\|\P f\|_{L^{p'}},
\end{aligned}
\end{equation}
for any $2\leq p\leq\infty$, $t\neq0$. These two estimates together give
\begin{equation*}
\|U_Z(t)f\|_{L^p_x}\lesssim|t|^{-3\left(\frac12-\frac1p\right)}\|f\|_{L^{p'}_x},
\end{equation*}
for $2\leq p<\infty$. Let us notice that the dispersive estimate for $p=\infty$ does not hold for $U_Z(t)$ anymore because the projection operators $\Q, \P$ are not bounded from $L^1$  into itself. 
Nevertheless by using the dispersive estimates in \eqref{eq:disp_pq} and the result in \cite{KT} we infer the whole set of Strichartz estimates for the irrotational and solenoidal part, separately. By summing them up we thus find the Strichartz estimates for the propagator in \eqref{eq:prop}. 

\begin{lemma}\label{lemma:strich}
Let $(q, r)$, $(\gamma, \rho)$ be two arbitrary admissible pairs and let $\alpha\geq1$, then we have
\begin{align}
\|U(\alpha t)\P f\|_{L^q_t(I;L^r_x)}&\leq C \alpha^{-\frac2q}\|f\|_{L^2_x},\label{e5.20}\\
\bigg\|\int_{0}^{t}U(\alpha(t-s))\P F(s)\,ds\bigg\|_{L^q_t(I;L^r_x)}&\leq C\alpha^{-\left(\frac{1}{q}+\frac{1}{\gamma}\right)}\|F\|_{L^{\gamma^\prime}_t(I;L^{\rho^\prime}_x)}.\notag
\end{align}
and
\begin{equation*}
\begin{aligned}
\|U(t)\Q f\|_{L^q_t(I;L^r_x)}&\leq C \|f\|_{L^2_x},\\
\bigg\|\int_{0}^{t}U(t-s)\Q F(s)\,ds\bigg\|_{L^q_t(I;L^r_x)}&\leq C\|F\|_{L^{\gamma^\prime}_t(I;L^{\rho^\prime}_x)},
\end{aligned}
\end{equation*}
Consequently we also have 
\begin{align}\label{eq2.10}
\|U_Z(t)g\|_{L^q_t(I;L^r_x)}&\leq C\|f\|_{L^2_x},\\\label{eq2.11}
\bigg\|\int_{0}^{t}U_Z(t-s)F(s)\,ds\bigg\|_{L^q_t(I;L^r_x)}&\leq C\|F\|_{L^{\gamma^\prime}_t(I;L^{\rho^\prime}_x)}.
\end{align}
\end{lemma}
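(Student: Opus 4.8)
The plan is to treat the solenoidal and irrotational parts of the propagator separately, since by \eqref{eq:prop} we have $U_Z(t)=U(\alpha t)\P+U(t)\Q$ and $\P,\Q$ are complementary orthogonal projections on $L^2$. For each piece I would feed the corresponding dispersive estimate in \eqref{eq:disp_pq} into the abstract Strichartz theorem of Keel--Tao \cite{KT}, carefully tracking the powers of $\alpha$ carried by the rescaled propagator $U(\alpha t)$. The one structural point to keep in mind throughout is that the endpoint dispersive bound $L^1\to L^\infty$ is unavailable after projecting, because $\P$ and $\Q$ are not bounded on $L^1$; accordingly the whole argument must be run using only the estimates \eqref{eq:disp_pq} in the range $2\le p<\infty$, which is precisely the regime covered by \cite{KT} once the forbidden endpoint is excluded.

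For the irrotational propagator $U(t)\Q$ the argument is $\alpha$-independent. Since $\Q=-(-\Delta)^{-1}\nabla\diver$ is a Calder\'on--Zygmund operator, it is bounded on $L^p$ for every $1<p<\infty$; composing the standard Schr\"odinger dispersive estimate with this boundedness gives the first line of \eqref{eq:disp_pq}, and the Keel--Tao machinery then yields both the homogeneous and the inhomogeneous Strichartz estimates for $U(t)\Q$ with constants independent of $\alpha$. This is the routine part, producing the two displayed irrotational estimates directly.

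The gain in $\alpha$ enters only through the solenoidal propagator $U(\alpha t)\P$, whose dispersive bound is the second line of \eqref{eq:disp_pq}. Using the admissibility relation $3(\tfrac12-\tfrac1r)=\tfrac2q$ and the boundedness of $\P$ on $L^{r'}$, this reads $\|U(\alpha t)\P f\|_{L^r}\lesssim|\alpha t|^{-3(1/2-1/r)}\|f\|_{L^{r'}}=\alpha^{-2/q}\,|t|^{-2/q}\,\|f\|_{L^{r'}}$, so the rescaled propagator disperses with an extra factor $\alpha^{-2/q}$ attached to each admissible pair $(q,r)$. Feeding this rescaled dispersive bound into the homogeneous Strichartz estimate yields exactly the gain $\alpha^{-2/q}$ recorded in \eqref{e5.20}, namely $\|U(\alpha t)\P f\|_{L^q_t(I;L^r_x)}\le C\alpha^{-2/q}\|f\|_{L^2}$. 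For the inhomogeneous estimate the two admissible pairs $(q,r)$ and $(\gamma,\rho)$ each contribute their own rescaled dispersive factor, so a bookkeeping of the powers of $\alpha$ in the off-diagonal inhomogeneous Strichartz estimate produces the combined gain $\alpha^{-(1/q+1/\gamma)}$; the off-diagonal pairs are reached through the Christ--Kiselev lemma, which remains applicable since we never touch the double endpoint.

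Finally, the estimates \eqref{eq2.10} and \eqref{eq2.11} for the full propagator $U_Z$ follow by summing the solenoidal and irrotational contributions: since $\alpha\ge1$, all the solenoidal gains $\alpha^{-2/q}$ and $\alpha^{-(1/q+1/\gamma)}$ are bounded by $1$ and may simply be discarded, leaving the $\alpha$-independent constants. I expect the main obstacle to lie not in the $\alpha$-bookkeeping but in the endpoint structure: because $\P$ and $\Q$ fail to be bounded on $L^1$ one cannot invoke the $L^1\to L^\infty$ dispersive bound, so the argument must be organised so that only the $2\le p<\infty$ estimates of \eqref{eq:disp_pq} are used, together with the non-endpoint Keel--Tao theorem and the Christ--Kiselev lemma for the retarded inhomogeneous term. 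Checking that this restricted family of dispersive bounds still generates the full range of admissible pairs, uniformly in $\alpha\ge1$, is the delicate step.
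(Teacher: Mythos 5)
Your outline coincides with the paper's own (the paper gives no detailed proof of this lemma; it simply invokes the dispersive bounds \eqref{eq:disp_pq} together with \cite{KT} for the two projections separately and then sums, exactly as you propose), and your treatment of the irrotational part is correct, though it can be simplified: since $\Q$ commutes with $e^{it\Delta}$ and is bounded on $L^2$ and on every $L^p$ with $1<p<\infty$, all the stated estimates for $U(t)\Q$ follow by applying the standard Strichartz estimates for $e^{it\Delta}$ (including the endpoint) directly to the projected data $\Q f$ and $\Q F$; no Strichartz theory for the projected propagator, and no Christ--Kiselev argument, is needed, so the endpoint structure you single out as the delicate point is not actually an obstruction.

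The genuine gap is in the solenoidal part, at the step ``feeding this rescaled dispersive bound into the homogeneous Strichartz estimate yields exactly the gain $\alpha^{-2/q}$.'' A dispersive constant does not pass linearly into the homogeneous Strichartz inequality: in the $TT^*$ argument it is $TT^*$ that carries the dispersive constant, and $\|T\|=\|TT^*\|^{1/2}$, so the factor $\alpha^{-2/q}$ in \eqref{eq:disp_pq} produces only $\alpha^{-1/q}$ in \eqref{e5.20}. The change of variables $\tau=\alpha t$ shows this is sharp:
\begin{equation*}
\|U(\alpha t)\P f\|_{L^q_t([0,T];L^r_x)}=\alpha^{-1/q}\,\|U(\tau)\P f\|_{L^q_\tau([0,\alpha T];L^r_x)}\geq \alpha^{-1/q}\,\|U(\tau)\P f\|_{L^q_\tau([0,T];L^r_x)}
\end{equation*}
for $\alpha\geq1$, and the last factor is a fixed positive number for any nonzero solenoidal $f$; hence no argument can yield $\alpha^{-2/q}$, and \eqref{e5.20} as stated is false (the correct exponent is $\alpha^{-1/q}$). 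Your own bookkeeping already betrays this: the inhomogeneous gain $\alpha^{-(1/q+1/\gamma)}$, which is correct (factor the untruncated Duhamel operator through $L^2$ as a composition of two homogeneous estimates, or rescale in time as above), is the product of the gains $\alpha^{-1/q}$ and $\alpha^{-1/\gamma}$; if the homogeneous gain really were $\alpha^{-2/q}$, that same composition would give $\alpha^{-2(1/q+1/\gamma)}$, not $\alpha^{-(1/q+1/\gamma)}$. So your proof of \eqref{e5.20} cannot be repaired, only the statement can; and the discrepancy is not academic, since the paper later applies \eqref{e5.20} with $(q,r)=(2,6)$ to extract a factor $\alpha^{-1}$ against the factor $\alpha$ multiplying $\Delta u_0$, whereas the true gain $\alpha^{-1/2}$ does not suffice for that cancellation.
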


\begin{remark}
The following remarks are in order.
\begin{itemize}[leftmargin=+.2in]%\setlength{\itemindent}{-.3in}
\item From the estimates in the Lemma above it is already straightforward that, at least in the linear evolution, we can separate the fast and slow dynamics and that the fast one is asymptotically vanishing. This is somehow similar to what happens with rapidly varying dispersion management, see for example \cite{ASS}.
\item Let us notice that the constants in \eqref{eq2.10} and \eqref{eq2.11} are uniformly bounded for $\alpha\geq1$. This is straightforward but it is a necessary remark to infer that the existence time in the local well-posedness section is uniformly bounded from below for any $\alpha\geq1$.
\end{itemize}
\end{remark}

\section{Local existence theory.}\label{sect:LWP}
In this Section we study the local well-posedness of \eqref{ZG} in the space $\mathcal{H}_2$.
We are going to perform a fixed point argument in order to find a unique local solution in the time interval $[0, T]$, for some $0<T<\infty$. By standard arguments it is then possible to extend the solution up to a maximal time $T_{max}$ for which the blow-up alternative holds.
However, due to the loss of derivatives on the term $|u|^2$, we cannot proceed in a straightforward way, thus we follow the approach in \cite{OT1} where the authors use an auxiliary system to overcome this difficulty. 
More precisely, let us define $v:=\d_tu$, then by differentiating the Schr\"odinger equation in \eqref{ZG}  with respect to time, we write the following system
\begin{equation}\label{ZM2}
\left\{ \begin{array}{lll}
i\partial_tv-\alpha\nabla\times\nabla\times v+\nabla\diver v=nv+\partial_tn u \\
\partial_{tt}n-\Delta n=\Delta|u|^2 \\
iv-\alpha\nabla\times\nabla\times u+\nabla\diver u=nu
\end{array} \right..
\end{equation}
Differently from \cite{OT1}, here we encounter a further difficulty. Indeed we have that the initial datum for $v$ is given by
\begin{equation}\label{eq:v0id}
v(0)=-i\alpha\nabla\times\nabla\times u_0+i\nabla\diver u_0-in_0u_0,
\end{equation}
which in general is not uniformly bounded in $L^2$ for $\alpha\geq1$. 
Hence the standard fixed point argument applied to the integral formulation of \eqref{ZM2} would give a local solution on a time interval $[0, T^\alpha]$, where $T^\alpha$ goes to zero as $\alpha$ goes to infinity.
For this reason we  introduce the alternative variable
\begin{equation}\label{eq:v_tilde}
\tilde v(t):=v(t)-U(\alpha t)\P(i\alpha\Delta u_0),
\end{equation}
for which we prove that the existence time $T^\alpha$ is uniformly bounded from below for $\alpha\geq1$.
The main result of this Section concerns the local well-posedness for \eqref{ZM2}.
\begin{proposition}\label{prop:small_times}
Let $(u_0, n_0, n_1)\in \mathcal{H}_2$ be such that
\begin{equation*}
M:=\|(u_0, n_0, n_1)\|_{\mathcal{H}_2}.
\end{equation*}
Then, for any $\alpha\geq1$ there exists $\tau=\tau(M)$ and a unique local solution $(u, n)\in\mathcal C([0, \tau]; \mathcal{H}_2)$ to \eqref{ZG} such that
\begin{equation*}
\sup_{[0, \tau]}\|(u, n, \d_tn)(t)\|_{\mathcal{H}_2}\leq2M
\end{equation*}
and
\begin{equation*}
\|v\|_{L^2_tL^6_x}\leq CM,
\end{equation*}
where $C$ does not depend on $\alpha\geq1$.
\end{proposition}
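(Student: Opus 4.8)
The plan is to run a contraction mapping for the auxiliary system \eqref{ZM2}, working throughout with the shifted variable $\tilde v$ from \eqref{eq:v_tilde} so that the data become bounded uniformly in $\alpha$. Write $w(t):=U(\alpha t)\P(i\alpha\Delta u_0)$. First I would record the two algebraic facts that make the substitution effective. Using $\nabla\times\nabla\times u_0=-\Delta\P u_0$ and $\nabla\diver u_0=\Delta\Q u_0$ in \eqref{eq:v0id}, the singular term $i\alpha\Delta\P u_0$ in $v(0)$ is exactly cancelled by $w(0)$, leaving
\begin{equation*}
\tilde v(0)=i\Delta\Q u_0-in_0u_0,
\end{equation*}
whose $L^2$ norm is bounded by $C(M+M^2)$ uniformly in $\alpha\geq1$ (here $n_0\in H^1\hookrightarrow L^2$ and $u_0\in H^2\hookrightarrow L^\infty$). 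Since $w$ is the free solenoidal flow, $iw=L_\alpha\big(U(\alpha t)\P u_0\big)$ with $L_\alpha:=\alpha\nabla\times\nabla\times-\nabla\diver$, so $\tilde v$ solves the same equation as $v$ but with the uniformly bounded datum above; the price is the extra inhomogeneity $n\,w$ in its Duhamel formula. The crucial uniform estimate is that, by \eqref{e5.20} at the endpoint admissible pair $(q,r)=(2,6)$, the two powers of $\alpha$ cancel,
\begin{equation*}
\|w\|_{L^2_tL^6_x}\leq C\alpha^{-1}\,\|i\alpha\Delta u_0\|_{L^2}\leq CM,
\end{equation*}
so $w$ is controlled uniformly in $\alpha$ precisely in the norm $L^2_tL^6_x$ appearing in the statement.

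With this in hand I would set up the solution map on a ball of radius $\sim 2M$ in the norm $\|\tilde v\|_{L^\infty_tL^2\cap L^2_tL^6_x}+\|n\|_{\mathcal W^1}+\|u\|_{L^\infty_tH^2}$ over a short interval $[0,\tau]$. The components are produced by the $U_Z$-Duhamel formula for $\tilde v$ with forcing $n(\tilde v+w)+\partial_tn\,u$, by the wave Duhamel formula for $(n,\partial_tn)$ with forcing $\Delta|u|^2$, and by the Schr\"odinger equation for $u$. Since all the $U_Z$-Strichartz estimates \eqref{eq2.10}--\eqref{eq2.11} are uniform in $\alpha\geq1$, for $\tilde v$ I would estimate, using $H^1\hookrightarrow L^3$ and $H^2\hookrightarrow L^\infty$,
\begin{equation*}
\|n\tilde v\|_{L^1_tL^2}+\|nw\|_{L^1_tL^2}\lesssim\tau^{1/2}\|n\|_{L^\infty_tH^1}\big(\|\tilde v\|_{L^2_tL^6}+\|w\|_{L^2_tL^6}\big),\qquad \|\partial_tn\,u\|_{L^1_tL^2}\lesssim\tau\,\|\partial_tn\|_{L^\infty_tL^2}\|u\|_{L^\infty_tH^2},
\end{equation*}
so that the $n\,w$ term is harmless thanks to the uniform bound above. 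For the wave part, $H^2(\R^3)$ is an algebra, whence $\||u|^2\|_{H^2}\lesssim\|u\|_{H^2}^2$; since $\frac{\sin(t\sqrt{-\Delta})}{\sqrt{-\Delta}}\Delta$ and $\cos(t\sqrt{-\Delta})\Delta$ map $H^2$ into $H^1$ and $L^2$ respectively, I obtain $\|n\|_{\mathcal W^1}\lesssim M+\tau\|u\|_{L^\infty_tH^2}^2$.

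The regularity of $u$ is the one point that must be handled through the state equation rather than by Duhamel, since $nu$ is not controlled in $H^2$. I would get the $L^2$ and $H^1$ bounds on $u$ from its Schr\"odinger Duhamel formula together with the unitarity of $U_Z$ on $L^2$ and the product estimate $\|nu\|_{H^1}\lesssim\|n\|_{H^1}\|u\|_{H^2}$, which yields $\|u\|_{L^\infty_tH^1}\lesssim M+\tau\|n\|_{L^\infty_tH^1}\|u\|_{L^\infty_tH^2}$. For the top order I would use the third line of \eqref{ZM2} in the form $L_\alpha\big(u-U(\alpha t)\P u_0\big)=i\tilde v-nu$, where the singular part $iw$ has again been absorbed into the free solenoidal flow; because $\alpha\geq1$ the symbol of $L_\alpha$ satisfies $|\xi|^2(\alpha\hat\P+\hat\Q)\geq|\xi|^2$, giving the $\alpha$-uniform elliptic bound
\begin{equation*}
\|\nabla^2u\|_{L^2}\lesssim\|\nabla^2U(\alpha t)\P u_0\|_{L^2}+\|L_\alpha(u-U(\alpha t)\P u_0)\|_{L^2}\lesssim M+\|\tilde v\|_{L^2}+\|n\|_{H^1}\|u\|_{H^2}.
\end{equation*}

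Collecting the three estimates, the map sends the ball of radius $2M$ into itself once $\tau=\tau(M)$ is small, uniformly in $\alpha$; the same bilinear estimates run on differences (the fixed datum $\tilde v_0$ cancels, and the function $w$ enters only through its uniform $L^2_tL^6_x$ norm) give a contraction after possibly shrinking $\tau$. The fixed point is the unique local solution in $\mathcal C([0,\tau];\mathcal H_2)$, and recalling $v=\tilde v+w$ we get $\|v\|_{L^2_tL^6}\leq\|\tilde v\|_{L^2_tL^6}+\|w\|_{L^2_tL^6}\leq CM$. I expect the main obstacle to be the uniformity in $\alpha$: it is secured exactly by the endpoint cancellation $\alpha^{-2/q}\cdot\alpha=1$ at $(q,r)=(2,6)$ in the control of $w$, by the cancellation of the $iw$ term in both the $\tilde v$-equation and the state equation for $u$, and by the uniform ellipticity of $L_\alpha$ for $\alpha\geq1$.
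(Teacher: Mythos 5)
Your argument is essentially the paper's: the same substitution $\tilde v=v-U(\alpha t)\P(i\alpha\Delta u_0)$, the same cancellation $\alpha^{-2/q}\cdot\alpha=1$ at the admissible pair $(2,6)$ to control the shifted free flow uniformly in $\alpha$, and the same recovery of the top-order regularity of $u$ from the elliptic state equation (the paper applies $\P$ and $\Q$ separately rather than invoking a single uniform ellipticity bound for $L_\alpha$, but that is cosmetic). The one point to patch is the term $\|n\|_{H^1}\|u\|_{H^2}$ in your elliptic estimate for $\|\nabla^2u\|_{L^2}$, which is circular at top order for large $M$: replace it by $\|nu\|_{L^2}\lesssim\|n\|_{H^1}\|u\|_{H^1}$ and close with your separate $H^1$ bound, or, as the paper does, substitute $u=u_0+\int_0^t v\,ds$ into the nonlinear term so that $u$ is an explicit functional of $(\tilde v,n)$ and the contraction runs on $(\tilde v,n)$ alone.
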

By standard arguments we then extend the local solution in \autoref{prop:small_times} to a maximal existence interval where the standard blow-up alternative holds true.
\begin{theorem}\label{frthm1}
Let $(u_0, n_0, n_1)\in \mathcal{H}_2$, then for any $\alpha\geq1$ there exists a unique maximal solution $(u^\alpha, v^\alpha, n^\alpha)$ to \eqref{ZM2} with initial data $(u_0, v(0), n_0, n_1)$, $v(0)$ given by \eqref{eq:v0id}, on the maximal existence interval $I_\alpha:=[0, T_{max}^\alpha)$, for some $T_{max}^\alpha>0$. The solution satisfies the following regularity properties:
\begin{itemize}
\item $u^{\alpha}\in\,\mathcal{C}(I_\alpha;H^2), \;u^\alpha\in S^2([0, T]),\;\forall\;0<T<T^\alpha_{max}$,
\item $v^{\alpha}\in\,\mathcal C(I_\alpha;L^2),\,\;v^\alpha\in S^0([0, T]),\;\forall\;0<T<T^\alpha_{max}$,
\item $n^{\alpha}\in\,\mathcal{C}(I_\alpha;H^1)\cap\mathcal C^1(I_\alpha; L^2)$.
\end{itemize}

\noindent Moreover, the following blow-up alternative holds true: $T^\alpha_{max}<\infty$ if and only if
\begin{equation*}
\lim_{t\to T^\alpha}\|(u^\alpha, n^\alpha)(t)\|_{\mathcal{H}_2}=\infty.
\end{equation*}
Finally, the map $\mathcal H_2\to\mathcal C([0, T_{max});\mathcal H_2)$ associating any initial datum to its solution is a continuous operator.
\end{theorem}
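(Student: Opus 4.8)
The plan is to bootstrap the local solution of \autoref{prop:small_times} into a maximal one and then to extract the claimed regularity, the blow-up alternative, and continuous dependence from the quantitative bounds furnished there. The structural fact I would exploit throughout is that, for $t_0>0$, the quantity $v(t_0)$ is not independent data but is slaved to $u$ through the state equation (the third line of \eqref{ZM2}): $v(t_0)=-i\alpha\nabla\times\nabla\times u(t_0)+i\nabla\diver u(t_0)-in(t_0)u(t_0)$, so that for fixed $\alpha$ one has $\|v(t_0)\|_{L^2}\lesssim_\alpha(1+\|n(t_0)\|_{H^1})\|u(t_0)\|_{H^2}$. Consequently the only information needed to restart the flow at an interior time is the triple $(u,n,\partial_tn)(t_0)\in\mathcal H_2$, and the non-uniform behaviour of $v(0)$ noted before \eqref{eq:v_tilde} never reappears. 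I would therefore set $T_{max}^\alpha$ to be the supremum of all times $T$ admitting a solution with the stated regularity on $[0,T]$, and glue the solutions produced by \autoref{prop:small_times} on overlapping intervals, using the uniqueness built into that contraction, into a single maximal solution on $I_\alpha$.

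For the blow-up alternative I would argue by contraposition. Assuming $T_{max}^\alpha<\infty$ but $\|(u^\alpha,n^\alpha)(t)\|_{\mathcal H_2}$ bounded along some sequence $t_k\uparrow T_{max}^\alpha$ by a constant $M_0$, I would restart \autoref{prop:small_times} from the data at $t_k$; since the local existence time depends only on the $\mathcal H_2$-norm of $(u,n,\partial_tn)$, it may be taken to be a fixed $\tau_0=\tau(M_0)>0$ independent of $k$. Choosing $k$ with $T_{max}^\alpha-t_k<\tau_0$ and matching the two solutions on their overlap by uniqueness extends the solution beyond $T_{max}^\alpha$, a contradiction. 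The point to emphasize is that only $(u,n,\partial_tn)$ enters the criterion, precisely because $v$ is recovered from $u\in H^2$.

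The regularity statements would then follow by a finite-covering argument. Fixing $0<T<T_{max}^\alpha$, the map $t\mapsto\|(u^\alpha,n^\alpha,\partial_tn^\alpha)(t)\|_{\mathcal H_2}$ is continuous, hence bounded on $[0,T]$, so the local time $\tau$ is uniform there and $[0,T]$ splits into finitely many subintervals each carrying the bounds $u\in S^2$, $v\in S^0$, $n\in\mathcal W^1$ of \autoref{prop:small_times}; summing these contributions gives finiteness of the corresponding norms on all of $[0,T]$. The time-continuity $u\in\mathcal C(I_\alpha;H^2)$, $v\in\mathcal C(I_\alpha;L^2)$ and $n\in\mathcal C(I_\alpha;H^1)\cap\mathcal C^1(I_\alpha;L^2)$ I would read off from the Duhamel representation for \eqref{ZM2}, the Strichartz estimates of \autoref{lemma:strich}, and dominated convergence, exactly as in the local step.

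Finally, continuous dependence is inherited from the fixed-point map, which is Lipschitz in $\mathcal H_2$ on $[0,\tau]$, and is propagated to $[0,T]$ by iterating along the same finite cover, estimating the difference of two solutions through Duhamel and the Strichartz bounds on each piece so that the product of the finitely many local Lipschitz constants controls the data-to-solution map into $\mathcal C([0,T];\mathcal H_2)$. I expect the genuine work to lie here: one must guarantee that solutions emanating from nearby data persist on the whole of $[0,T]$, which I would secure by combining the blow-up alternative with a continuation argument, showing that data converging in $\mathcal H_2$ keep their solutions' $\mathcal H_2$-norms uniformly bounded on $[0,T]$ so that no local existence time in the cover degenerates.
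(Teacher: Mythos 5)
Your proposal is correct, but it covers the complementary half of the argument from the one the paper actually writes out. The paper's proof of \autoref{frthm1} consists almost entirely of the contraction-mapping construction of the local solution: it introduces the shifted variable $\tilde v(t)=v(t)-U(\alpha t)\P(i\alpha\Delta u_0)$, sets up the integral system for $(\tilde v,n)$ with $u$ recovered from the elliptic state equation, and closes the fixed point in $S^2\times\mathcal W^1$ using the Strichartz gain $\alpha^{-1}$ from \eqref{e5.20} to absorb the singular factor $\alpha$; the passage to a maximal solution, the blow-up alternative and continuous dependence are then dispatched in one sentence as ``standard arguments.'' You instead take the local result (\autoref{prop:small_times}) as given and supply precisely those standard arguments in detail. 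Your key structural observation --- that $v(t_0)$ is slaved to $(u,n)(t_0)$ via the third equation of \eqref{ZM2}, so that restarting the flow only requires control of $(u,n,\partial_tn)(t_0)\in\mathcal H_2$ and the $\alpha$-singular behaviour of $v(0)$ never recurs at interior times --- is exactly the point that makes the blow-up criterion depend only on $(u^\alpha,n^\alpha)$, and it is left implicit in the paper; making it explicit is a genuine improvement in clarity. The gluing, finite-cover regularity bookkeeping for $S^2$, $S^0$ and $\mathcal W^1$, and the propagation of Lipschitz dependence along the cover are all sound. Two small points deserve care if you write this up fully: the uniqueness you invoke when matching solutions on overlaps is a priori only uniqueness within the ball $X$ of the contraction, so you should add the (routine) remark that any solution in the stated regularity class lies in such a ball on a short enough interval; and when you restart at $t_k$ you should note that the auxiliary decomposition must now be taken relative to $\P u(t_k)$ rather than $\P u_0$, which is harmless since \autoref{prop:small_times} applies verbatim to the shifted data. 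Neither point is a gap in the idea, and your proof, combined with the paper's contraction argument, yields the full statement.
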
 
\begin{remark}
The blow-up alternative above also implies in particular that the family of maximal existence times $T^\alpha$ is strictly bounded from below by a positive constant, i.e. there exists a $T^\ast>0$ such that $T^\ast\leq T^\alpha$ for any $\a\geq1$.
\end{remark}
\autoref{thm:lwp} yields in a straightforward way from \autoref{frthm1} above.
\begin{proof}[Proof of \autoref{thm:lwp}]
Let $(u^\alpha, v^\alpha, n^\alpha)$ be the solution to \eqref{ZM2} constructed in \autoref{frthm1}, then to prove the \autoref{thm:lwp} we only need to show that we identify $\d_tu^\alpha=v^\alpha$ in the distribution sense.
Let us differentiate with respect to $t$ the equation $$(1-\alpha\Delta\P-\Delta\Q)u=iv-(n-1)\bigg(u_0+\int_{0}^tv(s)\,ds\bigg)$$ obtaining 
\begin{equation}\label{e1.32}
(1-\alpha\Delta\P-\Delta\Q)\partial_tu=i\partial_tv-(n-1)v-\partial_tn\bigg(u_0+\int_{0}^t v(s)\,ds\bigg),
\end{equation}
this equation holding in $H^{-2},$ while the first equation of \eqref{ZM2} gives us 
\begin{equation*}
(1-\alpha\Delta\P-\Delta\Q)v=i\partial_tv-(n-1)v-\partial_tn\bigg(u_0+\int_{0}^tv(s)\,ds\bigg).
\end{equation*}
Also the equation above is satisfied in $H^{-2}$ and therefore in the same distributional sense we have 
\begin{equation*}
\partial_tu=v.
\end{equation*}
Moreover from \eqref{e1.32} we get 
\begin{equation}\notag
\partial_tu=(1-\alpha\Delta\P-\Delta\Q)^{-1}\bigg(i\partial_tv-(n-1)v-\partial_tn\bigg(u_0+\int_{0}^tv(s)\,ds\bigg)\bigg)\in\mathcal{C}(I;L^2)
\end{equation}
therefore $u\in\mathcal{C}^1(I;L^2).$ It is straightforward that $u^\a(0,x)=u_0$ and so the proof is complete. 
\end{proof}

\begin{proof}[Proof of \autoref{frthm1}]
As discussed above, we are going to prove the result by means of a fixed point argument. Let us define the function
\begin{equation*}
\tilde v(t):=v(t)-U(\alpha t)\P(i\alpha\Delta u_0).
\end{equation*}
We look at the integral formulation for \eqref{ZM2}, namely
\begin{equation}\label{eq:duh_v}
\begin{aligned}
v(t)=U_Z(t)v(0)-i\int_0^tU_Z(t-s)\left(nv+\d_tnu\right)(s)\,ds\\
\end{aligned}
\end{equation}
\begin{equation*}
n(t)=\cos(t|\nabla|)n_0+\frac{\sin(t|\nabla|)}{|\nabla|}n_1+\int_0^t\frac{\sin((t-s)|\nabla|)}{|\nabla|}\Delta|u|^2\,ds,
\end{equation*}
with $u$ determined by the following elliptic equation
\begin{equation*}
-\alpha\nabla\times\nabla\times u+\nabla\diver u=n\left(u_0+\int_0^tv(s)\,ds\right)-iv,
\end{equation*}
and $v(0)$ is given by \eqref{eq:v0id}. This implies that $\tilde v$ must satisfy the following integral equation
\begin{equation*}
\begin{aligned}
\tilde v(t)&=U(\alpha t)\P(-in_0u_0)+U(t)\Q(i\Delta u_0-in_0u_0)\\
&\phantom{\quad}-i\int_0^tU_Z(t-s)\left(\tilde vn+nU(\alpha\cdot)\P(i\alpha\Delta u_0)+\d_tnu\right)(s)\,ds.
\end{aligned}
\end{equation*}
Let us consider the space
\begin{equation*}
\begin{aligned}
X=\big\{(\tilde v,n):\,&\tilde v\in S^2([0, T]), n\in \mathcal W^1([0,T]),\\
&\|\tilde v\|_{S^2(I)}\leq M, \|n\|_{\mathcal W^1(I)}\leq M\big\},
\end{aligned}
\end{equation*}
endowed with the norm
\begin{equation*}
\|(\tilde v, n)\|_X:=\|\tilde v\|_{S^2(I)}+\|n\|_{\mathcal W^1(I)}.
\end{equation*}
Here $0<T\leq1, M>0$ will be chosen subsequently and $I:=[0,T]$.
From the third equation in \eqref{ZM2} and the definition of $\tilde v$ we have
\begin{equation}\label{eq:102}
\begin{aligned}
-\alpha\nabla\times\nabla\times u+\nabla\diver u&= -i \tilde{v}-iU(\a t)(i\a\Delta\P u_0)\\
&\phantom{\quad\,}-in\bigg(u_0+\int_0^t\tilde{v}(s)+U(\a s)(i\a\Delta\P u_0)\,ds\bigg),
\end{aligned}
\end{equation}
thus it is straightforward to see that given $n, \tilde v$, then $u$ is uniquely determined. Furthermore, by applying the projection operators $\P, \Q$, respectively, to \eqref{eq:102} we obtain
\begin{equation*}
\a\Delta\P u=-i\P[\tilde{v}+U(\a t)\P(i\a\Delta u_0)]+\P\bigg[n \bigg(u_0+\int_0^t\tilde{v}(s)+U(\a s)\P(i\a\Delta u_0)\,ds \bigg)\bigg]
\end{equation*}
and 
\begin{equation*}
\Delta \Q u=-i\Q\tilde{v}+\Q \bigg [n \bigg(u_0+\int_0^t\tilde{v}(s)+U(\a s)\P(i\a\Delta u_0)\,ds \bigg) \bigg].
\end{equation*}
We now estimate the irrotational and solenoidal parts of $\Delta u$ separately. 
Let us start with $\Q\Delta u:$ by H\"older inequality and Sobolev embedding we obtain
\begin{equation*}
\begin{aligned}
\|\Delta\Q u\|_{L^\infty_tL^2_x}&\lesssim\|\tilde{v}\|_{L^\infty_tL^2_x}+\|n\|_{L^\infty_tH^1_x}\|u_0\|_{H^2}+T^{1/2}\|n\|_{L^\infty_tH^1_x}\|\tilde v\|_{L^{2}_tL^6_x}\\
&\phantom{\quad\,}+T^{1/2}\|n\|_{L^\infty_tH^1_x}\|U(\alpha t)\P(i\alpha\Delta u_0)\|_{L^2_tL^6_x}.
\end{aligned}
\end{equation*}
To estimate the last term, we use the Strichartz estimate in \eqref{e5.20}; let us notice that by choosing the admissible exponents $(q, r)=(2, 6)$ we obtain a factor $\alpha^{-1}$ in the estimate, which balances the term $\alpha$ appearing above. We thus have
\begin{equation*}
\|\Delta\Q u\|_{L^\infty_tL^2_x}\lesssim(\|u_0\|_{H^2}+1)M+M^2.
\end{equation*}
By similar calculations, we also obtain an estimate for $\P\Delta u$,
\begin{equation*}
\|\P\Delta u\|_{L^\infty_tL^2_x}\lesssim \|u_0\|_{H^2}^2+\|u_0\|_{H^2}M+M^2.
\end{equation*}
We then sum up the contributions given by the irrotational and solenoidal parts to get
\begin{equation}\label{eq:3.15}
\|u\|_{L^\infty_tH^2_x}\lesssim \|u_0\|_{H^2}^2+\|u_0\|_{H^2}M+M^2\leq C(\|u_0\|_{H^2})\big(1+M^2\big).
\end{equation}

\noindent Similar calculations also give
\begin{equation*}
\begin{aligned}
\|u-u'\|_{L^\infty(I;H^2)}&\lesssim\|\tilde v-\tilde v'\|_{L^{\infty}_tL^2_x}+\|n-n'\|_{L^\infty_tH^1_x}\\
&\phantom{\quad\,}+M(\|n-n'\|_{L^\infty_tH^1_x}+\|\tilde v-\tilde v'\|_{L^{2}_tL^6_x})\\
&\leq C(1+M)\|(\tilde v, n)-(\tilde v', n')\|_X.
\end{aligned}
\end{equation*}

Given $(\tilde v, n)\in X$ we define the map $\Phi:X\to X$, $\Phi(\tilde v, n)=(\Phi_S, \Phi_W)(\tilde v, n)$ by
\begin{align}
\Phi_S&=U(\alpha t)\P(-in_0u_0)+U(t)\Q(i\Delta u_0-in_0u_0)\label{eq:fixs}\\
&\phantom{\quad\,}-i\int_0^tU(\alpha(t-s))\P(\tilde vn+nU(\alpha\cdot)\P(i\alpha\Delta u_0)+\d_tnu)(s)\,ds\notag\\
&\phantom{\quad\,}-i\int_0^tU(t-s)\Q\left(n\tilde v+nU(\alpha\cdot)(i\alpha\Delta u_0)+\d_tnu\right)(s)\,ds\notag\\
\Phi_W&=\cos(t|\nabla|)n_0+\frac{\sin(t|\nabla|)}{|\nabla|}n_1+\int_0^t\frac{\sin((t-s)|\nabla|)}{|\nabla|}\Delta|u|^2(s)\,ds,\label{eq:fixw}
\end{align}
where $u$ in the formulas above is given by \eqref{eq:102} and its $L^\infty_tH^2_x$ norm is bounded in \eqref{eq:3.15}. Let us first prove that, by choosing $T$ and $M$ properly, $\Phi$ maps $X$ into itself. 

Let us first analyze the Schr\"odinger part \eqref{eq:fixs}, by the Strichartz estimates in \autoref{lemma:strich}, H\"older inequality and Sobolev embedding we have
\begin{equation*}
\|U(\alpha t)\P(-in_0u_0)+U(t)\Q(i\Delta u_0-in_0u_0)\|_{L^{q}L^r}\lesssim\|u_0\|_{H^2}+\|n_0\|_{H^1}\|u_0\|_{H^2}\\
%&\textcolor{blue}{\leq C_1(\|u_0\|_{H^2},\|n_0\|_{H^1})}
\end{equation*}
We treat the inhomogeneous part similarly,
%\begin{equation*}
%\begin{aligned}
\begin{equation*}
\begin{aligned}
\bigg\|\int_0^tU_Z(t-s)\left(n\tilde v+nU(\alpha s)(i\alpha\P \Delta u_0)\right)(s)\,ds\bigg\|_{L^q_tL^r_x}&\lesssim\|n\tilde v+nU(\alpha \cdot)(i\alpha\Delta\P u_0)\|_{L^1_tL^2_x}\\
\lesssim T^{1/2}\|n\|_{L^\infty_tH^1_x}(\|\tilde v\|_{L^2_tL^6_x}+\|U(\alpha t)\P(i\alpha\Delta u_0)\|_{L^2_tL^6_x})&\lesssim T^{1/2}M(M+\|u_0\|_{H^2}).
%\lesssim&T^{\frac12}\|n\|_{L^\infty_tH^1_x}\left(\|\tilde v\|_{L^2_tL^6_x}+\|\Delta u_0\|_{L^2}\right),
\end{aligned}
\end{equation*}
%\end{aligned}
%\end{equation*}
where in the last inequality we again used \eqref{e5.20} with $(2, 6)$ as admissible pair. Similarly,
\begin{equation*}
\begin{aligned}
\bigg\|\int_0^tU_Z(t-s)\left(\d_tnu\right)(s)\,ds\bigg\|_{L^q_tL^r_x}&\lesssim T\|\d_tn\|_{L^\infty_tL^2_x}\|u\|_{L^\infty_tH^2_x}\\
&\lesssim C(\|u_0\|_{H^2})TM\big(1+M^2\big),
\end{aligned}
\end{equation*}
where in the last line we use the bound \eqref{eq:3.15}.
Collecting these estimates we get
\begin{equation}\label{eq:201}
\|\Phi_S(\tilde v, n)\|_{L^q_tL^r_x}\leq C(\|u_0\|_{H^2}, \|n_0\|_{L^2})+CT^{1/2}M(1+M).
\end{equation}

\noindent For the wave component we use formula \eqref{eq:fixw} and H\"older inequality to obtain
\begin{equation*}
\begin{aligned}
%\|\Phi_W(v, n)\|_{L^\infty_tH^1_x\cap W^{1, \infty}_tL^2_x}\leq& C(1+T)\|n_0\|_{H^1}+\|n_1\|_{L^2}+\|\Delta|u|^2\|_{L^1_tL^2_x}\\
\|\Phi_W(v, n)\|_{\mathcal W^1(I)}&\leq C(1+T)\|n_0\|_{H^1}+\|n_1\|_{L^2}+\|\Delta|u|^2\|_{L^1_tL^2_x}\\
&\leq C\left(\|n_0\|_{H^1}+\|n_1\|_{L^2}\right)+T\|u\|^2_{L^\infty_tH^2_x},
\end{aligned}
\end{equation*}
where we used the fact that $H^2(\R^3)$ is an algebra. From \eqref{eq:3.15} we infer
\begin{equation}\label{eq:202}
\|\Phi_W(v, n)\|_{\mathcal W^1(I)}\leq C(\|n_0\|_{H^1}, \|n_1\|_{L^2})+T\big(M+M^4\big).
%\|\Phi_W(v, n)\|_{L^\infty_tH^1_x\cap W^{1, \infty}_tL^2_x}\leq C(\|n_0\|_{H^1}, \|n_1\|_{L^2})+T\big(M+M^4\big).
\end{equation}
The bounds \eqref{eq:201} and \eqref{eq:202} together yield
\begin{equation*}
\|\Phi(\tilde v, n)\|_{X}\leq C(\|(u_0, n_0, n_1)\|_{\mathcal H_2})+CT^{1/2}M(1+M^3).
\end{equation*}
Let us choose $M$ such that 
\begin{equation*}
\frac{M}{2}=C(\|(u_0, n_0, n_1)\|_{\mathcal H_2})
\end{equation*}
and $T$ such that
\begin{equation*}
CT^{1/2}(1+M^3)<\frac12,
\end{equation*}
we then obtain $\|\Phi(\tilde v, n)\|_X\leq M$.
Hence $\Phi$ maps $X$ into itself. It thus remains to prove that $\Phi$ is a contraction.
Arguing similarly to what we did before we obtain %\textcolor{red}{completa le linee sotto}
\begin{equation*}
\begin{aligned}
\|\Phi_S(\tilde{v}, n)-\Phi_S(\tilde{v}', n')\|_{L^{q}_tL^r_x}&\leq C T^{1/2}(1+M)\|(\tilde v,n)-(\tilde v',n')\|_{L^{q}_tL^r_x}\\
\|\Phi_W(\tilde{v}, n)-\Phi_W(\tilde{v}', n')\|_{\mathcal W^1(I)}&\leq C T\big(1+M^3\big)\|(\tilde v,n)-(\tilde v',n')\|_{\mathcal W^1(I)}.
%\|\Phi_W(\tilde{v}, n)-\Phi_W(\tilde{v}', n')\|_{L^\infty_tH^1\cap W^{1, \infty}_tL^2_x}\leq&\tilde C T\big(1+M^3\big)\|(\tilde v,n)-(\tilde v',n')\|_{L^\infty_tH^1\cap W^{1, \infty}_tL^2_x}.
\end{aligned}
\end{equation*}
%\textcolor{red}{perc\'e usi $\tilde C$ se nell'intro dici che $C$ changes from line to line???}

\noindent By possibly choosing a smaller $T>0$ such that $C T^{1/2}(1+M^3)<1$ 
then we see that $\Phi:X\to X$ is a contraction and consequently there exists a unique $(\tilde v, n)\in X$ which is a fixed point for $X$. Let us notice that the time $T$ depends only on $M$, hence 
$T=T(\|(u_0, n_0, n_1)\|_{\mathcal H_2})$. Furthermore from the definition of $\tilde v$ it follows that $(u, v, n)$ is a solution to \eqref{ZM2}, where $v=\tilde v+U(\alpha t)\P(i\alpha\Delta u_0)$. %Again from the Strichartz estimates in \autoref{lemma:strich} we also see that if $(q, r)$ is an admissible pair different from $(\infty, 2)$, then
%\begin{equation*}
%\|v\|_{L^q_tL^r_x([0, T]\times\R^3)}\leq C(\|(u_0, n_0, n_1)\|_{\mathcal H_2}, T),\quad\forall\;\alpha\geq1.
%\end{equation*}
From \eqref{eq:3.15} we also see that the $L^\infty_tH^2_x$ norm of $u$ is uniformly bounded in $\alpha$.

Finally, from standard arguments we extend the solution on a maximal time interval, on which the standard blow-up alternative holds true and we can also infer the continuous dependence on the initial data.
\end{proof}

\section{Convergence of solutions.}\label{sect:conv}
Given the well-posedness results of the previous Section, we are now ready to study the electrostatic limit for the vectorial Zakharov system \eqref{ZG}. In order to understand the effective dynamics we consider the system \eqref{ZG} in its integral formulation, by splitting the Schr\"odinger linear propagator in its fast and slow dynamics, i.e. $U_Z(t)=U(\alpha t)\P+U(t)\Q$. In particular for $u^\alpha$ we have
\begin{equation*}
u^\alpha(t)=U(\alpha t)\P u_0+U(t)\Q u_0-i\int_0^tU( \alpha(t-s))\P(nu)(s)\,ds-i\int_0^tU(t-s)\Q(nu)(s)\,ds.
\end{equation*}
Due to fast oscillations, we expect that the terms of the form $U(\alpha t)f$ go weakly to zero as $\alpha\to0$. This fact can be quantitatively seen by using the Strichartz estimates in \eqref{e5.20}. However, while for the third term we can choose $(\gamma, \rho)$ in a suitable way such that it converges to zero in every Strichartz space, by the unitarity of $U(\alpha t)$ we see that $\|U(\alpha t)\P u_0\|_{L^\infty_tL^2_x}$ cannot converge to zero, while $\|U(\alpha t)\P u_0\|_{L^q_tL^r_x}\to0$ for any admissible pair $(q, r)\neq(\infty, 2)$.

This is indeed due to the presence of an initial layer for the electrostatic limit for \eqref{ZG} when dealing with ``ill-prepared'' initial data. In general, for arbitrary initial data, the right convergence should be given by
\begin{equation*}
\tilde u^\alpha(t):=u^\alpha(t)-U(\alpha t)\P u_0\to u^\infty
\end{equation*}
in all Strichartz spaces, where $u^\infty$ is the solution to \eqref{ZL}. Let us notice that $\tilde u^\alpha$ is related to the auxiliary variable $\tilde v^\alpha$ defined in \eqref{eq:v_tilde} and used to prove the local well-posedness results in \autoref{sect:LWP}, since we have $\tilde v^\alpha=\d_t\tilde u^\alpha$.

Our strategy to prove the electrostatic limit goes through studying the convergence of $(v^\alpha, n^\alpha, u^\alpha)$, studied in the previous Section, towards solutions to 
\begin{equation}\label{eq4.5}
\left\{ \begin{array}{lll}
i\partial_tv^{\infty}+\Delta v^{\infty}=\Q(n^{\infty}v^{\infty}+\partial_tn^{\infty} u^{\infty}) \\
\partial_{tt}n^{\infty}-\Delta n^{\infty}=\Delta|u^{\infty}|^2 \\
iv^{\infty}+\Delta u^{\infty}=\Q(n^{\infty}u^{\infty}),
\end{array} \right.
\end{equation}  
which is the auxiliary system associated to \eqref{ZL}. Again, we exploit such auxiliary formulations in order to overcome the difficulty generated by the loss of derivatives on the terms $|u^\alpha|^2$ and $|u^\infty|^2$. 

Unfortunately our strategy is not suitable to study the limit in the presence of an initial layer. Indeed for ill-prepared data we should consider $\tilde u^\alpha$ and consequently $\tilde v^\alpha$ defined in 
\eqref{eq:v_tilde} for the auxiliary system. This means that when studying the auxiliary variable $v^\alpha$ the initial layer itself becomes singular. For this reason here we restrict ourselves to study the limit with well-prepared data. More specifically, we consider $(u_0^\alpha, n_0^\alpha, n_1^\alpha)\in \mathcal{H}_2$ such that
\begin{equation}\label{eq:conv_id}
\|(u_0^\alpha, n_0^\alpha, n_1^\alpha)-(u_0^\infty, n_0^\infty, n_1^\infty)\|_{\mathcal{H}_2}\to0
\end{equation}
for some $(u_0^\infty, n_0^\infty, n_1^\infty)\in \mathcal{H}_2$ and 
\begin{equation}\label{eq:wp}
\|\P u_0^\alpha\|_{H^2}\to0.
\end{equation}
This clearly implies that the initial datum for the limit equation \eqref{ZL} is irrotational, i.e. $\P u_0^\infty=0$.

\begin{remark}
In view of the above discussion, it is reasonable to think about studying the initial layer by considering the Cauchy problem for the Zakharov system in low regularity spaces, by exploiting recent results in \cite{BC, GTV, BH}. However this goes beyond the scope of our paper and it could be the subject of some future investigations.
\end{remark}

%One way to avoid the well-preparedness of the initial data would be to study the convergence in rougher spaces, by using similar arguments to \cite{BH}, but this goes beyond the scope of this paper.

To prove the convergence result stated in \autoref{thm:main} we will study the convergence from \eqref{ZM2} to \eqref{eq4.5}. The main result of this Section is the following.
\begin{theorem}\label{mainth}
Let $\alpha\geq1$ and let $(u_0^\alpha, n_0^\alpha, n_1^\alpha)$, $(u_0^\infty, n_0^\infty, n_1^\infty)\in \mathcal{H}_2$ be initial data such that \eqref{eq:conv_id} and \eqref{eq:wp} hold true. Let 
$(u^\alpha, v^\alpha, n^\alpha)$ be the maximal solution to \eqref{ZM2} with Cauchy data $(u_0^\alpha, n_0^\alpha, n_1^\alpha)$ given by \autoref{frthm1} %with initial data determined by $(u_0^\alpha, n_0^\alpha, n_1^\alpha)$ in the maximal interval\textcolor{blue}{change accordingly to Theorem on LWP} $[0, T^\alpha_{max})$ 
and analogously let $(u^\infty, v^\infty, n^\infty)$ be the maximal solution to \eqref{eq4.5} in the interval $[0, T^\infty_{max})$ accordingly to \autoref{lwpOT1}. Then for any $0<T<T^\infty_{max}$ we have
\begin{equation*}
\lim_{\alpha\to\infty}\|(u^\alpha, v^\alpha, n^\alpha)-(u^\infty, v^\infty, n^\infty)\|_{L^\infty(0, T;\mathcal H_2)}=0.
\end{equation*}
\end{theorem}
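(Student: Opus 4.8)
The plan is to fix $0<T<T^\infty_{max}$, on which the limit solution $(u^\infty,v^\infty,n^\infty)$ enjoys a finite bound $R:=\|(u^\infty,v^\infty,n^\infty)\|_{L^\infty(0,T;\mathcal H_2)}+\|v^\infty\|_{L^2_tL^6_x}<\infty$ by \autoref{lwpOT1}, and to run a continuity (bootstrap) argument comparing the two auxiliary systems \eqref{ZM2} and \eqref{eq4.5} in their Duhamel formulations. First I would write $v^\alpha$ via the splitting $U_Z(t)=U(\alpha t)\P+U(t)\Q$ and $v^\infty$ via the Schr\"odinger group with the projector $\Q$ sitting inside the nonlinearity, recovering the envelopes $u$ from the respective state equations (third lines of \eqref{ZM2} and \eqref{eq4.5}) and the $n$'s from the wave Duhamel formula. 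Subtracting, the differences $z^\alpha:=u^\alpha-u^\infty$, $w^\alpha:=v^\alpha-v^\infty$, $m^\alpha:=n^\alpha-n^\infty$ solve a coupled integral system whose source splits into three groups: (a) linear terms carrying the fast propagator $U(\alpha\cdot)\P$; (b) linear terms from the convergence of the initial data under the slow propagator $U(\cdot)\Q$; and (c) nonlinear differences propagated by the slow group.

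For group (a) I would invoke \autoref{lemma:strich}: the homogeneous fast term $U(\alpha t)\P v^\alpha(0)$ and the inhomogeneous fast terms $\int_0^t U(\alpha(t-s))\P(\cdots)\,ds$ all carry a strictly negative power of $\alpha$ when measured in any admissible Strichartz norm away from the energy pair $(\infty,2)$. Combined with the well-preparedness \eqref{eq:wp}, this tames even the a priori $\alpha$-singular piece $i\alpha\Delta\P u_0^\alpha$ of $v^\alpha(0)$ (cf.\ \eqref{eq:v0id}, \eqref{eq:v_tilde}), since choosing $(q,r)=(2,6)$ in \eqref{e5.20} gives a factor $\alpha^{-1}\cdot\alpha\|\P u_0^\alpha\|_{\dot H^2}\lesssim\|\P u_0^\alpha\|_{H^2}\to0$; hence every term in (a) is $o_\alpha(1)$. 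This is precisely why the natural norm for the $v$-component is the Strichartz norm $L^2_tL^6_x$ appearing in \autoref{thm:lwp} rather than $L^\infty_tL^2_x$, in which the solenoidal initial layer $\P(n_0^\alpha u_0^\alpha)$ would otherwise oscillate without decaying. Group (b) is $o_\alpha(1)$ directly from \eqref{eq:conv_id}, the strong continuity of $U(t)\Q$, and the irrotationality $u_0^\infty=\Q u_0^\infty$, which aligns $\Q v^\alpha(0)$ with $v^\infty(0)$ in the limit.

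For group (c) I would estimate the nonlinear differences $n^\alpha v^\alpha-n^\infty v^\infty=m^\alpha v^\alpha+n^\infty w^\alpha$, $\partial_t n^\alpha u^\alpha-\partial_t n^\infty u^\infty$, and $|u^\alpha|^2-|u^\infty|^2$ (bilinear in $z^\alpha$ and $u^\alpha+u^\infty$) using H\"older, Sobolev embeddings, the algebra property of $H^2(\R^3)$, and the uniform-in-$\alpha$ a priori bounds of \autoref{thm:lwp} and \autoref{prop:small_times}, so that all coefficients are controlled by $R$ and $C(R)$. Together with the slow-group Strichartz bounds of \autoref{lemma:strich} and the wave estimates in $\mathcal W^1$, this produces a closed Gronwall inequality $\Theta^\alpha(t)\leq o_\alpha(1)+C(R)\int_0^t\Theta^\alpha(s)\,ds$, where $\Theta^\alpha$ collects the relevant $\mathcal H_2$- and Strichartz-norms of $(z^\alpha,w^\alpha,m^\alpha)$ on $[0,t]$; Gronwall then forces $\Theta^\alpha(T)\to0$. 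A preliminary step is to show $T^\alpha_{max}>T$ for all large $\alpha$: the uniform bound of \autoref{thm:lwp} keeps $\|(u^\alpha,n^\alpha,\partial_t n^\alpha)\|_{\mathcal H_2}$ finite wherever the difference stays bounded, so the blow-up alternative of \autoref{frthm1} cannot trigger before $T$, which simultaneously yields $\liminf_{\alpha\to\infty}T^\alpha_{max}\geq T^\infty_{max}$.

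The hard part will be to reconcile the fast oscillation with the loss of derivatives: the variable $v\sim\Delta u$ already encodes two derivatives and the wave nonlinearity $\Delta|u|^2$ forces the argument to the level of the auxiliary systems rather than \eqref{ZG}, \eqref{ZL} directly, so the fast contributions must be shown to decay in exactly the Strichartz norms that also close the nonlinear Gronwall loop without any constant degenerating as $\alpha\to\infty$. The most delicate point is the choice of dual admissible pair in the inhomogeneous fast estimate of \autoref{lemma:strich}: one must stay away from the endpoint, so that the power $\alpha^{-(1/q+1/\gamma)}$ is strictly negative, while still placing the source $n^\alpha v^\alpha+\partial_t n^\alpha u^\alpha$ in a space bounded uniformly in $\alpha$ by the a priori estimates. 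Everything else is a routine, if lengthy, combination of Strichartz and energy estimates.
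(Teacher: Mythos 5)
Your proposal follows essentially the same route as the paper: the fast/slow splitting $U_Z(t)=U(\alpha t)\P+U(t)\Q$, the $\alpha^{-2/q}$ gain of \autoref{lemma:strich} in the non-endpoint pair $(2,6)$ to tame the singular datum $i\alpha\Delta\P u_0^\alpha$ via the well-preparedness \eqref{eq:wp}, the nonlinear difference estimates carried out on the auxiliary systems \eqref{ZM2} and \eqref{eq4.5}, and the interleaved propagation of existence and convergence through the uniform local time $\tau(N)$ of \autoref{prop:small_times}. The only cosmetic deviation is in how the loop is closed: the right-hand side actually produced by these estimates is of absorption type, $\Theta^\alpha(T)\leq o_\alpha(1)+C(M)\,T^{1/2}\,\Theta^\alpha(T)$, rather than a literal Gronwall integral inequality, so the paper concludes by taking $T$ small depending on the a priori bound $M$ and gluing finitely many subintervals --- an adaptation your argument accommodates without difficulty.
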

The proof of the Theorem above is divided in two main steps. First of all we prove in \autoref{lemma4.2} that, as long as the $\mathcal{H}_2$ norm of $(u^\alpha(T), n^\alpha(T),\d_tn^\a(T))$ is bounded, then the convergence holds true in $[0, T]$. The second one consists in proving that the $\mathcal{H}_2$ bound on $(u^\alpha(T), n^\alpha(T), \d_tn^\alpha(T))$ holds true for any $0<T<T^\infty_{max}$. A similar strategy of proof is already exploited in the literature to study the asymptotic behavior of time oscillating nonlinearities, see for example \cite{CS} where the authors consider a time oscillating nonlinearity or \cite{AW} where in a system of two nonlinear Schr\"odinger equations a rapidly varying linear coupling term is averaging out the effect of nonlinearities. We also mention \cite{CPS} where a similar strategy is also used to study a time oscillating critical Korteweg-de Vries equation.

\begin{lemma}\label{lemma4.2}
Let $(u^\alpha, v^\alpha, n^\alpha)$, $(u^\infty, v^\infty, n^\infty)$ be defined as in the statement of \autoref{mainth} and let us assume that for some $0<T_1<T^\infty_{max}$ we have
\begin{equation*}
\sup_{\alpha\geq1}\|(u^\alpha, n^\alpha, \d_tn^\alpha)\|_{L^\infty(0, T_1;\mathcal H_2)}<\infty.
\end{equation*}
It follows that
\begin{equation*}
\lim_{\alpha\to\infty}\left(\|u^\alpha-u^\infty\|_{L^\infty_tH^2_x}+\|v^\alpha-v^\infty\|_{L^2_tL^6_x}+\|n^\alpha-n^\infty\|_{\mathcal W^1}\right)=0,
\end{equation*}
where all the norms are taken in the space-time slab $[0, T_1]\times\R^3$. In particular we have
\begin{equation*}
\lim_{\alpha\to\infty}\|(u^\alpha, n^\alpha, \d_tn^\alpha)-(u^\infty, n^\infty, \d_tn^\infty)\|_{L^\infty(0, T_1;\mathcal H_2)}=0.
\end{equation*}
\end{lemma}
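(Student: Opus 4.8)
The plan is to assemble a closed system of estimates for the three differences in the norms of the statement, close it on a time interval of length $\tau$ uniform in $\alpha$, and then cover $[0,T_1]$ by finitely many such steps. Write $U_Z=U(\alpha\cdot)\P+U(\cdot)\Q$ and keep the auxiliary variable $\tilde v^\alpha=v^\alpha-U(\alpha t)\P(i\alpha\Delta u_0^\alpha)$ of \eqref{eq:v_tilde}; note that the hypothesis, together with \autoref{lemma:strich}, also yields uniform $S^0$ control of $\tilde v^\alpha$ on $[0,T_1]$. The organising remark is that the fast propagator $U(\alpha t)\P$ is annihilated by $\Q$, so the $\Q$-projected difference $W:=\Q(\tilde v^\alpha-v^\infty)=\Q v^\alpha-v^\infty$ obeys a genuine Schr\"odinger--Duhamel identity
\[
W(t)=U(t)W(0)-i\int_0^tU(t-s)\,\Q\big(N^\alpha-N^\infty\big)(s)\,ds,
\]
with $N^\alpha=n^\alpha\tilde v^\alpha+n^\alpha U(\alpha\cdot)\P(i\alpha\Delta u_0^\alpha)+\d_tn^\alpha u^\alpha$ and $N^\infty=n^\infty v^\infty+\d_tn^\infty u^\infty$; here $W(0)=\Q\tilde v^\alpha(0)-v^\infty(0)\to0$ in $L^2$ by \eqref{eq:conv_id}. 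Using the slow Strichartz bounds of \autoref{lemma:strich} I would estimate $\|W\|_{S^0}\lesssim\|W(0)\|_{L^2}+\|N^\alpha-N^\infty\|_{L^1_tL^2_x}$, then split $N^\alpha-N^\infty$ via $\tilde v^\alpha-v^\infty=\P\tilde v^\alpha+W$ into pieces of the form $n^\alpha W$, $(n^\alpha-n^\infty)v^\infty$, $\d_tn^\alpha(u^\alpha-u^\infty)$, each controlled by the uniform $\mathcal H_2$ bounds times a positive power of $T$.

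The genuinely small contributions come from the fast-Strichartz gain: by \eqref{e5.20} with the admissible pair $(2,6)$ one has $\|U(\alpha t)\P(i\alpha\Delta u_0^\alpha)\|_{L^2_tL^6_x}\lesssim\|\P u_0^\alpha\|_{H^2}\to0$, the decay $\alpha^{-1}$ exactly balancing the prefactor $\alpha$, and likewise $\|\P\tilde v^\alpha\|_{L^2_tL^6_x}\to0$; hence every occurrence of a solenoidal part or of a $U(\alpha\cdot)$-term is $o(1)$. This also disposes of the target $v$-norm, since $\|v^\alpha-v^\infty\|_{L^2_tL^6_x}\le\|\P v^\alpha\|_{L^2_tL^6_x}+\|W\|_{L^2_tL^6_x}=o(1)+\|W\|_{S^0}$. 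For the wave field the profile equation is identical in both systems, so $n^\alpha-n^\infty$ solves a Duhamel formula forced by $\Delta(|u^\alpha|^2-|u^\infty|^2)$, and the standard $\mathcal W^1$ estimate with the $H^2$-algebra property yields $\|n^\alpha-n^\infty\|_{\mathcal W^1}\lesssim\|\,\mathrm{data}\,\|+CT\|u^\alpha-u^\infty\|_{L^\infty_tH^2_x}$.

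The elliptic state equation is where I expect the real difficulty. Projecting $\alpha\Delta\P u^\alpha+\Delta\Q u^\alpha=n^\alpha u^\alpha-iv^\alpha$, the solenoidal part gives $\Delta\P u^\alpha=U(\alpha t)\P\Delta u_0^\alpha+\alpha^{-1}(\cdots)$, so $\|\P u^\alpha\|_{L^\infty_tH^2_x}\lesssim\|\P u_0^\alpha\|_{H^2}+\alpha^{-1}(\cdots)\to0$ while $\P u^\infty=0$; the irrotational part gives $\Delta\Q(u^\alpha-u^\infty)=\Q(n^\alpha u^\alpha-n^\infty u^\infty)-iW$. This controls only the $\dot H^2$ seminorm, so to reach the full $H^2$ norm I would interpolate the top-order term, $\|n^\alpha(u^\alpha-u^\infty)\|_{L^2}\lesssim\|n^\alpha\|_{H^1}\|u^\alpha-u^\infty\|_{L^2}^{1/2}\|u^\alpha-u^\infty\|_{\dot H^2}^{1/2}$, absorb half of the $\dot H^2$ norm on the left, and control the leftover $L^2$ norm through $u^\alpha-u^\infty=(u_0^\alpha-u_0^\infty)+\int_0^t(\tilde v^\alpha-v^\infty)\,ds+o(1)$. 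In this formula the $\int_0^t\P\tilde v^\alpha$ term is $o(1)$ because the time average $\int_0^tU(\alpha s)\,ds=(i\alpha\Delta)^{-1}(U(\alpha t)-\mathbf 1)$ gains a factor $\alpha^{-1}$ at high frequencies and vanishes at low frequencies by dominated convergence. The net result is $\|u^\alpha-u^\infty\|_{L^\infty_tH^2_x}\lesssim o(1)+C\big(\|W\|_{S^0}+\|n^\alpha-n^\infty\|_{\mathcal W^1}\big)$.

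Collecting the three estimates for $\Sigma:=\|W\|_{S^0}+\|u^\alpha-u^\infty\|_{L^\infty_tH^2_x}+\|n^\alpha-n^\infty\|_{\mathcal W^1}$ produces $\Sigma\lesssim\|\,\mathrm{data\ diff}\,\|+o(1)+C(T^{1/2}+T)\Sigma$, where $C$ depends only on the finite uniform $\mathcal H_2$ bounds and on $\sup_{[0,T_1]}$-norms of the limit solution, hence not on $\alpha$. Fixing $\tau$ with $C(\tau^{1/2}+\tau)<\tfrac12$ closes the estimate on $[0,\tau]$ and forces $\Sigma\to0$ there; since $\tau$ is uniform, I iterate on $[\tau,2\tau],[2\tau,3\tau],\dots$, at each restart taking as the new vanishing data the already-established convergence of $(u^\alpha,n^\alpha,\d_tn^\alpha)(k\tau)$ and of $W(k\tau)$, the well-prepared condition persisting because $\|\P u^\alpha\|_{L^\infty_tH^2_x}\to0$. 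After $\lceil T_1/\tau\rceil$ steps this gives convergence on all of $[0,T_1]$, which is the assertion of the Lemma, and in particular the stated $\mathcal H_2$ convergence.
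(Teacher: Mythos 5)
Your proposal is correct and follows essentially the same route as the paper: split $U_Z=U(\alpha\cdot)\P+U(\cdot)\Q$, use the fast Strichartz gain \eqref{e5.20} with the pair $(2,6)$ to make every solenoidal/fast contribution $o(1)$, close a coupled estimate for the $v$-, $n$- and $u$-differences (the last via the projected elliptic state equation) on a time interval small depending only on $M$, and iterate over $[0,T_1]$. Your reorganization through $W=\Q v^\alpha-v^\infty$ and the interpolation used to recover the $L^2$ part of the $H^2$ norm are only cosmetic variants of the paper's direct estimates.
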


We assume for the moment that \autoref{lemma4.2} holds true, then we first show how this implies \autoref{mainth}.

\begin{proof}[Proof of \autoref{mainth}.]
Let $0<T<T_{max}^\infty$ be fixed and let us define
\begin{equation*}
N:=2\|(u^\infty, n^\infty, \d_tn^\infty)\|_{L^\infty(0, T;\mathcal H_2)}.
\end{equation*}
From the local well-posedness theory, see \autoref{prop:small_times}, there exists $\tau=\tau(N)$ such that the solution $(u^\alpha, n^\alpha, \d_n^\alpha)$ to \eqref{ZM2} exists on $[0, \tau]$ and we have
\begin{equation*}
\|(u^\alpha, n^\alpha, \d_tn^\alpha)\|_{L^\infty(0, T_1;\mathcal H_2)}<\infty.
\end{equation*}
We observe that, because of what we said before, the choice $T_1=\tau$ is always possible. By the \autoref{lemma4.2} we infer that
\begin{equation*}
\lim_{\alpha\to\infty}\|(u^\alpha, n^\alpha, \d_tn^\alpha)-(u^\infty, n^\infty, \d_tn^\infty)\|_{L^\infty(0, T_1;\mathcal H_2)}=0.
\end{equation*}
On the other hand by the definition of $N$ we have that, for $\alpha\geq1$ large enough,
\begin{equation*}
\begin{aligned}
\|(u^\alpha, n^\alpha, \d_tn^\alpha)(T_1)\|_{\mathcal{H}_2}&\leq\|(u^\alpha, n^\alpha, \d_tn^\alpha)(T_1)-(u^\infty, n^\infty, \d_tn^\infty)(T_1)\|_{\mathcal{H}_2}\\
&\phantom{\quad\,}+\|(u^\infty, n^\infty, \d_tn^\infty)(T_1)\|_{\mathcal{H}_2}\leq N.
\end{aligned}
\end{equation*}
Consequently we can apply \autoref{prop:small_times} to infer that $(u^\alpha, n^\alpha)$ exists on a larger time interval $[0, T_1+\tau]$, provided $T_1+\tau\leq T$, and again
\begin{equation*}
\|(u^\alpha, n^\alpha, \d_tn^\alpha)\|_{L^\infty(0, T_1+\tau;\mathcal H_2)}\leq 2N.
\end{equation*}
We can repeat the argument iteratively on the whole interval $[0, T]$ to infer
\begin{equation*}
\|(u^\alpha, n^\alpha, \d_tn^\alpha)\|_{L^\infty(0, T;\mathcal H_2)}\leq 2N.
\end{equation*}
By using \autoref{lemma4.2} this proves the Theorem.
\end{proof}

It only remains now to prove \autoref{lemma4.2}.

\begin{proof}[Proof of \autoref{lemma4.2}]
Let us fix
\begin{equation*}
M:=\sup_{\alpha}\sup_{[0, T_1]}\|(u^\alpha, n^\alpha, \d_tn^\alpha)(t)\|_{\mathcal{H}_2}.
\end{equation*}
By using the integral formulation for \eqref{ZM2} and \eqref{eq4.5} we have
\begin{equation*}
\begin{aligned}
v^{\alpha}(t)-v^{\infty}(t)&=U(\alpha t)\P(\alpha\Delta u^{\alpha}_0-iu^{\alpha}_0n^{\alpha}_0)+U(t)\Q(v^{\alpha}_0-v^{\infty}_0)\\
&\phantom{\quad\,}-i\int_{0}^t U(\alpha(t-s))[\P  (\partial_t(n^{\alpha}u^{\alpha}))](s)\,ds\\
&\phantom{\quad\,}-i\int_{0}^t U(t-s)[\Q  (\partial_t(n^{\alpha}u^{\alpha})-\partial_t(n^{\infty}u^{\infty}))](s)\,ds.
\end{aligned}
\end{equation*}
Now we use the Strichartz estimates in \autoref{lemma:strich} to get
\begin{equation*}
\begin{aligned}
\|v^\alpha-v^\infty\|_{L^2_tL^6_x}&\lesssim\|\P u_0^\alpha\|_{H^2}+\alpha^{-1}\|n_0^\alpha\|_{H^1}\|u_0^\alpha\|_{H^2}+\|v^\alpha_0-v^\infty_0\|_{L^2}\\
&\phantom{\quad\,}+\alpha^{-1/2}\|n^\alpha v^\alpha+\d_tn^\alpha u^\alpha\|_{L^1_tL^2_x}\\
&\phantom{\quad\,}+\|n^\alpha v^\alpha-n^\infty v^\infty\|_{L^1_tL^2_x}+\|\d_tn^\alpha u^\alpha-\d_tn^\infty u^\infty\|_{L^1_tL^2_x}.
\end{aligned}
\end{equation*}
It is straightforward to check that, by H\"older inequality and Sobolev embedding,
\begin{equation*}
\begin{aligned}
\|n^\alpha v^\alpha+\d_tn^\alpha u^\alpha\|_{L^1_tL^2_x}&\leq C(T, M),\\
\|n^\alpha v^\alpha-n^\infty v^\infty\|_{L^1_tL^2_x}&\lesssim T^{1/2}(\|n^\alpha-n^\infty\|_{L^\infty_tH^1_x}+\|v^\alpha-v^\infty\|_{L^2_tL^6_x}),\\
\|\d_tn^\alpha u^\alpha-\d_tn^\infty u^\infty\|_{L^1_tL^2_x}&\lesssim T\left(\|\partial_tn^\alpha-\d_tn^\infty\|_{L^\infty_tL^2_x}+\|u^\alpha-u^\infty\|_{L^\infty_tH^2_x}\right).
\end{aligned}
\end{equation*}
By putting all the estimates together we obtain
\begin{equation*}
\begin{aligned}
\|v^\alpha-v^\infty\|_{L^2_tL^6_x}&\lesssim\|\P u_0^\alpha\|_{H^2}+\alpha^{-1}\|n_0^\alpha\|_{H^1}\|u_0^\alpha\|_{H^2}\|u_0^\alpha-u_0^\infty\|_{H^2}+\alpha^{-1/2}+\|n_0^\alpha-n_0^\infty\|_{H^1}\\
&\phantom{\quad\,}+T^{1/2}(\|u^\alpha-u^\infty\|_{L^\infty_tH^2_x}+\|v^\alpha-v^\infty\|_{L^2_tL^6_x}+\|n^\alpha-n^\infty\|_{\mathcal W^1}).
%&+T^{1/2}(\|u^\alpha-u^\infty\|_{L^\infty_tH^2_x}+\|v^\alpha-v^\infty\|_{L^2_tL^6_x})\\
%&+T^{1/2}(\|n^\alpha-n^\infty\|_{L^\infty_tH^1_x}+\|\d_tn^\alpha-\d_tn^\infty\|_{L^\infty_tL^2_x}).
\end{aligned}
\end{equation*}
To estimate the wave part in \eqref{ZM2} and \eqref{eq4.5}, we write
\begin{equation*}
\begin{aligned}
n^{\a}-n^{\i}&=\cos(t|\nabla|)(n^{\a}_0-n^{\i}_0)-\frac{\sin(t|\nabla|)}{|\nabla|}(n^{\a}_1-n^{\i}_1)\\
&\phantom{\quad\,}+\int_0^t\frac{\sin((t-s)|\nabla|)}{|\nabla|}\Delta(|u^{\a}|^2-|u^{\i}|^2)\,ds,
\end{aligned}
\end{equation*}
whence, by using again that $H^2(\R^3)$ is an algebra, 
\begin{equation*}
%\|n^\alpha-n^\infty\|_{L^\infty_tH^1_x}+\|\d_tn^\alpha-\d_tn^\infty\|_{L^\infty_tL^2}
\|n^\alpha-n^\infty\|_{\mathcal W^1}\lesssim\|n_0^\alpha-n_0^\infty\|_{H^1}+\|n_1^\alpha-n_1^\infty\|_{L^2}+T\|u^\alpha-u^\infty\|_{L^\infty_tH^2_x}.
\end{equation*}
The estimate for the difference $u^\alpha-u^\infty$ is more delicate. From the third equations in \eqref{ZM2} and \eqref{eq4.5} we have
\begin{equation*}
-\alpha\nabla\times\nabla\times u^\alpha+\nabla\diver(u^\alpha-u^\infty)=i(v^\alpha-v^\infty)-n^\alpha u^\alpha+\Q(n^\infty u^\infty).
\end{equation*}
Again, here we estimate separately the irrotational and solenoidal parts of the difference. For the solenoidal part we obtain
\begin{equation*}
\alpha\|\P\Delta u^\alpha\|_{L^\infty_tL^2_x}\lesssim\|v^\alpha\|_{L^\infty_tL^2_x}+\|n^\alpha u^\alpha\|_{L^\infty_tL^2_x}.
\end{equation*}
To estimate the $L^\infty_tL^2_x$ norm of $v^\alpha$ on the right hand side we use \eqref{eq:duh_v} and Strichartz estimates to infer
\begin{equation*}
\|v^\alpha\|_{L^\infty_tL^2_x}\lesssim\alpha\|\P u_0^\alpha\|_{H^2}\|u_0^\alpha\|_{H^2}\|n_0^\alpha\|_{H^1}+1.
\end{equation*}
Hence
\begin{equation*}
\alpha\|\P\Delta u^\alpha\|_{L^\infty_tL^2_x}\lesssim\alpha\|\P u^\a_0\|_{H^2}+\|u_0^\alpha\|_{H^2}\|n_0\|_{H^1}+1.
\end{equation*}
For the irrotational part
\begin{equation}\label{eq:u_diff}
\|\Q\Delta(u^\alpha-u^\infty)\|_{L^\infty_tL^2_x}\lesssim\|\Q(v^\alpha-v^\infty)\|_{L^\infty_tL^2_x}+\|n^\alpha-u^\alpha-n^\infty u^\infty\|_{L^\infty_tL^2_x}.
\end{equation}
By using \eqref{eq:duh_v}, the analogue integral formulation for $v^\infty$ and by applying the Helmholtz projection operator $\Q$ to their difference we have
that the first term on the right hand side is bounded by
\begin{equation*}
\begin{aligned}
\|\Q(v^\alpha-v^\infty)\|_{L^\infty_tL^2_x}&\lesssim\|u_0^\alpha-u_0^\infty\|_{H^2}+\|n_0^\alpha-n_0^\infty\|_{H^1}\\
&\phantom{\quad\,}+T^{1/2}\left(\|u^\alpha-u^\infty\|_{L^\infty_tH^2_x}+\|v^\alpha-v^\infty\|_{L^2_tL^6_x}+\|n^\alpha-n^\infty\|_{\mathcal W^1}\right).
%&+T^{1/2}\left(\|n^\alpha-n^\infty\|_{L^\infty_tH^1_x}+\|\d_tn^\alpha-\d_tn^\infty\|_{L^\infty_tL^2_x}\right).
\end{aligned}
\end{equation*}
The second term on the right hand side of \eqref{eq:u_diff} is estimated by
\begin{equation*}
\begin{aligned}
\|n^\alpha u^\alpha-n^\infty u^\infty\|_{L^\infty_tL^2_x}&\lesssim\|n^\alpha-n^\infty\|_{L^\infty_tL^2_x}\|u^\alpha\|_{L^\infty_tH^2_x}\\
&\phantom{\quad\,}+\|n^\infty(u_0^\alpha-u_0^\infty)\|_{L^\infty_tL^2_x}+\bigg\|n^\infty\int_0^t(v^\alpha-v^\infty)\bigg\|_{L^\infty_tL^2_x}\\
&\lesssim\left(\|n_0^\alpha-n_0^\infty\|_{L^2}+T\|\d_tn^\alpha-\d_tn^\infty\|_{L^\infty_tL^2_x}\right)M\\
&\phantom{\quad\,}+\|n^\infty\|_{L^\infty_tL^2_x}\|u_0^\alpha-u_0^\infty\|_{H^2_x}\\
&\phantom{\quad\,}+T^{1/2}\|n^\infty\|_{L^\infty_tH^1_x}\|v^\alpha-v^\infty\|_{L^2_tL^6_x}.
\end{aligned}
\end{equation*}
By summing up the two contribution in \eqref{eq:u_diff} we then get
\begin{equation*}
\begin{aligned}
\|\Q\Delta(u^\alpha-u^\infty)\|_{L^\infty_tL^2_x}\lesssim &\|u_0^\alpha-u_0^\infty\|_{H^2}+\|n_0^\alpha-n_0^\infty\|_{H^1}\\
&+T^{1/2}\left(\|u^\alpha-u^\infty\|_{L^\infty_tH^2_x}+\|v^\alpha-v^\infty\|_{L^2_tL^6_x}+\|n^\alpha-n^\infty\|_{\mathcal W^1}\right).
\end{aligned}
\end{equation*}
Finally, we notice that, by using the Schr\"odinger equations in \eqref{ZG} and \eqref{ZL}, we have
\begin{equation*}
\|u^\alpha-u^\infty\|_{L^\infty_tL^2_x}\lesssim T\left(\|n^\alpha-n^\infty\|_{L^\infty_tH^1_x}+\|u^\alpha-u^\infty\|_{L^\infty_tH^2_x}\right),
\end{equation*}
so that
\begin{equation*}
\begin{aligned}
\|u^\alpha-u^\infty\|_{L^\infty_tH^2_x}&\lesssim\|u_0^\alpha-u_0^\infty\|_{H^2}+\|n_0^\alpha-n_0^\infty\|_{H^1}+\|\P u^\a_0\|_{H^2}+\a^{-1}\\
&\phantom{\quad\,}+T^{1/2}\left(\|u^\alpha-u^\infty\|_{L^\infty_tH^2_x}+\|v^\alpha-v^\infty\|_{L^2_tL^6_x}+\|n^\alpha-n^\infty\|_{\mathcal W^1} \right).
\end{aligned}
\end{equation*}
Now we put everything together, we finally obtain
\begin{equation*}
\begin{aligned}
\|v^\alpha-v^\infty&\|_{L^2_tL^6_x}+\|n^\alpha-n^\infty\|_{\mathcal W^1}+\|u^\alpha-u^\infty\|_{L^\infty_tH^2_x}\lesssim\\
&\lesssim\|\P u_0^\alpha\|_{H^2}+\alpha^{-1}+\|u_0^\alpha-u_0^\infty\|_{H^2}+\|n_0^\alpha-n_0^\infty\|_{H^1}+\|n_1^\alpha-n_1^\infty\|_{L^2}\\
&\phantom{\quad\,}+T^{1/2}\left(\|u^\alpha-u^\infty\|_{L^\infty_tH^2_x}+\|v^\alpha-v^\infty\|_{L^2_tL^6_x}+\|n^\alpha-n^\infty\|_{\mathcal W^1}\right).
%&+T^{1/2}\left(\|n^\alpha-n^\infty\|_{L^\infty_tH^1_x}+\|\d_tn^\alpha-\d_tn^\infty\|_{L^\infty_tL^2_x}\right).
%\|v^\alpha-v^\infty&\|_{L^2_tL^6_x}+\|n^\alpha-n^\infty\|_{L^\infty_tH^1_x}
%+\|\d_tn^\alpha-\d_tn^\infty\|_{L^\infty_tL^2_x}+\|u^\alpha-u^\infty\|_{L^\infty_tH^2_x}\lesssim\\
%\lesssim&\|\P u_0^\alpha\|_{H^2}+\alpha^{-1}+\|u_0^\alpha-u_0^\infty\|_{H^2}+\|n_0^\alpha-n_0^\infty\|_{H^1}+\|n_1^\alpha-n_1^\infty\|_{L^2}\\
%&+T^{1/2}\left(\|u^\alpha-u^\infty\|_{L^\infty_tH^2_x}+\|v^\alpha-v^\infty\|_{L^2_tL^6_x}\right)\\
%&+T^{1/2}\left(\|n^\alpha-n^\infty\|_{L^\infty_tH^1_x}+\|\d_tn^\alpha-\d_tn^\infty\|_{L^\infty_tL^2_x}\right).
\end{aligned}
\end{equation*}
By choosing $T$ small enough depending on $M$ we can infer
\begin{equation*}
\begin{aligned}
\|v^\alpha-v^\infty&\|_{L^2_tL^6_x}+\|n^\alpha-n^\infty\|_{\mathcal W^1}+\|u^\alpha-u^\infty\|_{L^\infty_tH^2_x}\lesssim\\
&\lesssim\|\P u_0^\alpha\|_{H^2}+\alpha^{-1}+\|u_0^\alpha-u_0^\infty\|_{H^2}+\|n_0^\alpha-n_0^\infty\|_{H^1}+\|n_1^\alpha-n_1^\infty\|_{L^2}.
%\|v^\alpha-v^\infty&\|_{L^2_tL^6_x}+\|n^\alpha-n^\infty\|_{L^\infty_tH^1_x}
%+\|\d_tn^\alpha-\d_tn^\infty\|_{L^\infty_tL^2_x}+\|u^\alpha-u^\infty\|_{L^\infty_tH^2_x}\lesssim\\
%\lesssim&\|\P u_0^\alpha\|_{H^2}+\alpha^{-1}+\|u_0^\alpha-u_0^\infty\|_{H^2}+\|n_0^\alpha-n_0^\infty\|_{H^1}+\|n_1^\alpha-n_1^\infty\|_{L^2}.
\end{aligned}
\end{equation*}
This proves the convergence in the time interval $[0, T]$, for $T>0$ small enough. Let now $0<T_1$ be as in the statement of Lemma, we can divide $[0, T_1]$ into many subintervals of length $T$ such that the convergence holds in any small interval. By gluing them together we prove the Lemma.

\end{proof}

%%%%%%%%%%%%%%%%%%%%%%%%%%%%%%%%%%%%%%%%%%%%%%%%%%%%%%	VECCHIO PAPERO

\bibliographystyle{amsalpha}

\end{document}